\numberwithin{equation}{section}
\newtheorem{theorem}{Theorem}[section]
\newtheorem{corollary}[theorem]{Corollary}
\newtheorem{lemma}[theorem]{Lemma}
\newtheorem{proposition}[theorem]{Proposition}
\theoremstyle{definition}
\newtheorem{remark}[theorem]{Remark}
\newcommand{\R}{\mathbb{R}}	
\newcommand{\N}{\mathbb{N}} 
\newcommand{\dx}{\,\mathrm{d}x}	
\newcommand{\dy}{\,\mathrm{d}y}	
\newcommand{\dxdy}{\,\mathrm{d}x \mathrm{d}y}
\newcommand{\dxdt}{\,\mathrm{d}x \mathrm{d}t}
\newcommand{\ds}{\,\mathrm{d}S}	
\renewcommand{\d}{\,\mathrm{d}}
\newcommand{\norm}[1]{\left\lVert #1 \right\lVert}
\newcommand{\abs}[1]{\left| #1 \right|}
\newcommand{\sub}{\subseteq}
\newcommand{\weak}{\rightharpoonup}
\newcommand{\V}{\mathscr{V}}
\newcommand{\loc}{\textup{loc}}
\newcommand{\rad}{\textup{rad}}
\DeclareMathOperator{\dive}{\mathrm{div}}
\newenvironment{bvp}{\left\{\begin{aligned}  }{\end{aligned}\right.}
\newcommand{\Ds}{\mathcal{D}^{s,2}(\R^N)} 
\newcommand{\Dsr}{\mathcal{D}^{s,2}_{\rad}(\R^N)} 
\newcommand{\eps}{\varepsilon}
\title[]{Local multiplicity for fractional linear equations with Hardy potentials}
	\author[E. Mainini]{Edoardo Mainini}
\address{Edoardo Mainini
	\newline \indent Dipartimento di Ingegneria Meccanica, Energetica, Gestionale e dei Trasporti
	\newline \indent Universit\`a di Genova
	\newline\indent  via all'Opera Pia 15,  16145 Genova, Italy}
\email{edoardo.mainini@unige.it}
\author[R. Ognibene]{Roberto Ognibene}
\address{Roberto Ognibene
	\newline \indent Dipartimento di Matematica
	\newline \indent Universit\`a di Pisa
	\newline\indent  Largo Bruno Pontecorvo, 5, 56127 Pisa, Italy}
\email{roberto.ognibene@dm.unipi.it}
\author[B. Volzone]{Bruno Volzone}
\address{Bruno Volzone
	\newline \indent Dipartimento di Matematica
	\newline \indent Politecnico di Milano
	\newline\indent  Piazza Leonardo da Vinci 32, 20133 Milano, Italy}
\email{bruno.volzone@polimi.it}
\begin{document}

\keywords{Fractional Schr\"odinger equations, uniqueness of solutions, Hardy potential}

\subjclass{ 35A02,  
	35R11, 
	35B40, 
	35J75  
}

\begin{abstract}
	We exhibit existence of non-trivial solutions of some fractional linear Schr\"odinger equations which are radial and vanish at the origin. This is in stark contrast to what happens in the local case. We also prove analogous results in the presence of a Hardy potential.
\end{abstract}
	
\maketitle

\tableofcontents

\section{Introduction}\label{sec:introduction}

\subsection{Setting}

 For $N\in\N$ and $s\in(0,\min\{N/2,1\})$, we introduce the fractional Laplacian  in $\R^N$, defined as follows for $u\in C_c^\infty(\R^N)$
\[
	(-\Delta)^s u(x):=C_{N,s}\,\mathrm{p.v.}\int_{\R^N} \frac{u(x)-u(y)}{|x-y|^{N+2s}}\dy,\quad x\in\R^N.
\]
Here,
\[
C_{N,s}:=2^{2s}\pi^{-N/2}\frac{\Gamma(\tfrac{N+2s}{2})}{|\Gamma(-s)|}
\]
is chosen in such a way that
\[
	(-\Delta)^s u(x)\to -\Delta u(x)\quad\text{as }s\to 1^-~\text{for all }x\in\R^N.
\]
Since, by simple integration by parts, one can see that
\[
	\int_{\R^N}(-\Delta)^s u v\dx=\frac{C_{N,s}}{2}\int_{\R^{2N}}\frac{(u(x)-u(y))(v(x)-v(y))}{|x-y|^{N+2s}}\dxdy=\int_{\R^N}u(-\Delta)^s v\dx
\]
for all $u,v\in C_c^\infty(\R^N)$, one is led to consider it as a natural scalar product in $C_c^\infty(\R^N)$ related to $(-\Delta)^s$ and to work in the corresponding complete space. Hence, we define $\mathcal{D}^{s,2}(\R^N)$ as the completion of $C_c^\infty(\R^N)$ with respect to the norm 
\[
	\norm{\cdot}_{\mathcal{D}^{s,2}(\R^N)}:=\sqrt{(\cdot,\cdot)_{\mathcal{D}^{s,2}(\R^N)}}
\]
corresponding to the scalar product
\[
	(u,v)_{\mathcal{D}^{s,2}(\R^N)}:=\frac{C_{N,s}}{2}\int_{\R^{2N}}\frac{(u(x)-u(y))(v(x)-v(y))}{|x-y|^{N+2s}}\dxdy.
\]
Therefore, within this framework, it is possible to extend the definition of the fractional Laplacian $(-\Delta)^s$ (in a weak sense) to functions in $\mathcal{D}^{s,2}(\R^N)$. In view of the fractional Sobolev inequality, asserting that
\begin{equation*}
	S_{N,s}\norm{u}_{L^{2^*_s}(\R^N)}\leq \norm{u}_{\mathcal{D}^{s,2}(\R^N)}\quad\text{for all }u\in\mathcal{D}^{s,2}(\R^N),
\end{equation*}
where $2^*_s:=2N/(N-2s)$ and $S_{N,s}>0$, we have that $\mathcal{D}^{s,2}(\R^N)$ can be characterized as a concrete functional space, that is
\[
	\mathcal{D}^{s,2}(\R^N)=\{u\in L^{2^*_s}(\R^N)\colon \norm{u}_{\mathcal{D}^{s,2}(\R^N)}<\infty\}.
\]
On the other hand, it is well known that a Hardy inequality holds true in this fractional setting (see \cite{Herbst1977}), with optimal constant
\[
	\Lambda_{N,s}:=2^{2s}\left(\frac{\Gamma(\tfrac{N+2s}{4})}{\Gamma(\tfrac{N-2s}{4})}\right)^2.
\]
More precisely, there holds
\begin{equation}\label{eq:hardy}
	\Lambda_{N,s}\int_{\R^N}\frac{u^2}{|x|^{2s}}\dx\leq \norm{u}_{\mathcal{D}^{s,2}(\R^N)}^2\quad\text{for all }u\in \mathcal{D}^{s,2}(\R^N),
\end{equation}
so that the following additional characterization holds true
\begin{equation*}
	\Ds=\left\{u\in L^1_\loc(\R^N)\colon \norm{u}_{\Ds}^2+\int_{\R^N}\frac{u^2}{|x|^{2s}}\dx<\infty\right\}.
\end{equation*}

\subsection{Main results}  We consider the following equation
\begin{equation}\label{eq:main}
			(-\Delta)^su-\frac{\lambda}{|x|^{2s}} u =\V u \quad\text{in }B_1,
\end{equation}
in a weak sense, that is $u\in\Ds$ and
\begin{equation}\label{eq:main_weak}
	(u,\varphi)_{\mathcal{D}^{s,2}(\R^N)}-\lambda\int_{\R^N}\frac{u\varphi}{|x|^{2s}}\dx=\int_{\R^N}\V u\varphi\dx\quad\text{for all }\varphi\in C_c^\infty(B_1),
\end{equation}
where for any $r>0$ we denote $B_r:=\{x\in\R^N\colon |x|<r\}$. Moreover,  $\lambda<\Lambda_{N,s}$ and $\V\in C^1(B_1\setminus\{0\})$ is a  potential satisfying the following growth condition, which essentially says that, at the origin, it is negligible with respect to the Hardy potential:
\begin{equation}\label{eq:G}\tag{G}
	|\V(x)|+|x\cdot\nabla \V(x)|\leq C_{\V} |x|^{-2s+\eps},\quad\text{as }|x|\to 0,\quad\text{for some }C_{\V}>0\,,\eps>0.
\end{equation}
Our focus is on { radial solutions} of \eqref{eq:main}, therefore we introduce the space   $\mathcal{D}^{s,2}_{\rad}(\R^N)$ of radial functions in $\mathcal{D}^{s,2}(\R^N)$.  
Hence, we call $u$ a \emph{radial   weak   solution} of \eqref{eq:main} if  $u\in \mathcal{D}^{s,2}_{\rad}(\R^N)$ and $u$ satisfies \eqref{eq:main_weak}. We also introduce the following quantity, that will play a key role in our main result. We let
\begin{equation}\label{eq:gamma_lamda}
	\gamma_\lambda:=-\frac{N-2s}{2}+\sqrt{\left(\frac{N-2s}{2}\right)^2+\mu_1^s(\lambda)},
\end{equation}
being $\mu_1^s(\lambda)$   the first eigenvalue of    a spherical problem associated to the fractional Laplacian that we shall describe in detail in \Cref{sec:nonlocal_behvaior}, see \eqref{spherical} and \eqref{eq:mu_1} below. The value $\gamma_\lambda$ appears  in the local expansion of solutions of \eqref{eq:main} near the origin. Indeed,  in \Cref{sec:nonlocal_behvaior} we will recall a result by Fall and Felli \cite{Fall2014}, entailing that every radial solution $u$ to \eqref{eq:main} is continuous in $B_1\setminus\{0\}$ and such that the limit $\lim_{x\to 0}|x|^{-\gamma_\lambda}u(x)$ exists finite. In Lemma \ref{lemma:Psi_gamma} we will also provide a characterization of $\gamma_\lambda$ in terms of the inverse of an explicit function which naturally appears in the ground state representation for the fractional Laplacian (see e.g. \cite{Frank2008,Dipierro}).   
We are now ready to state our main result, which is about local non-uniqueness of radial solutions.
\begin{theorem}\label{thm:main_1}
	Let $\lambda<\Lambda_{N,s}$ and let $q>1$. Then, there exists a   radial   potential
	\begin{equation*}
		\V\in C^{1,\alpha}_{\loc}(B_1\setminus\{0\})\cap L^q(B_1)\quad\text{for some }\alpha\in(0,1)
	\end{equation*}
	satisfying \eqref{eq:G}, such that $\V>0$ in $B_1\setminus\{0\}$ and   satisfying the following property: equation \eqref{eq:main} admits a nontrivial radial weak solution $u\in\Dsr$  
	 such that
	\begin{equation}\label{limitcondition}
		\lim_{|x|\to 0}|x|^{-\gamma_\lambda}u(x)=0,
	\end{equation}
	where $\gamma_\lambda$ is as in \eqref{eq:gamma_lamda}. Moreover, $u\equiv 0$ in $\R^N\setminus B_R$ for some $R\geq 1$, and $u\not\equiv 0$ in $\Omega$ for any non-empty open set $\Omega\sub B_R$.
\end{theorem}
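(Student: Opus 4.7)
The plan is a backward construction: first choose a radial function $u$ with all the prescribed structural properties, then define
\[
    \V := \frac{(-\Delta)^s u - \lambda |x|^{-2s} u}{u}
\]
on $\{u\neq 0\}$, so that \eqref{eq:main_weak} becomes an identity by construction. The open-subset condition will follow from strict positivity of $u$ on $B_R\setminus\{0\}$, and membership $u\in\Dsr$ from smoothness and compact support.

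For the choice of $u$ I would take it radial, smooth in $\R^N\setminus\{0\}$, strictly positive in $B_R\setminus\{0\}$ for some $R>1$ (so that $\partial B_R$ stays outside $B_1$ and no singularity of $\V$ appears there), compactly supported in $\overline{B_R}$, with a profile of the form $u(x)=\phi(|x|)\eta(|x|)$ for a cutoff $\eta$. The main analytic tools for $Lu:=(-\Delta)^s u - \lambda |x|^{-2s} u$ are the explicit formula $(-\Delta)^s|x|^\sigma = \Psi_s(\sigma)|x|^{\sigma-2s}$ (valid for $\sigma$ in a suitable range) together with the identity $\Psi_s(\gamma_\lambda)=\lambda$ from Lemma \ref{lemma:Psi_gamma}, which let me compute the asymptotic expansion of $Lu$ near the origin modulo smooth cutoff corrections. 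The naive pure-power choice $\phi(r)\sim r^{\gamma_\lambda+\beta}$ with $\beta>0$ already delivers \eqref{limitcondition} but gives $\V\sim (\Psi_s(\gamma_\lambda+\beta)-\lambda)|x|^{-2s}$, which is exactly at the Hardy scale and fails \eqref{eq:G}. To achieve the subcritical decay required by (G) I would refine $\phi$ by adding a correction tuned so that the leading $|x|^{-2s}$-coefficient in $\V$ vanishes; a natural candidate is a resonant log-correction, exploiting the identity
\[
    L(|x|^\sigma\log|x|) = \Psi'_s(\sigma)|x|^{\sigma-2s} + (\Psi_s(\sigma)-\lambda)|x|^{\sigma-2s}\log|x|
\]
obtained by differentiating $L(|x|^\sigma)=(\Psi_s(\sigma)-\lambda)|x|^{\sigma-2s}$ in $\sigma$ (at $\sigma=\gamma_\lambda$ this collapses to $L(|x|^{\gamma_\lambda}\log|x|)=\Psi'_s(\gamma_\lambda)|x|^{\gamma_\lambda-2s}$, giving an extra resonant mode not present in the pure-power family), with the coefficients tuned to cancel the critical term while preserving $|x|^{-\gamma_\lambda}u(x)\to 0$ and the positivity of $\V$.

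With such a refined $u$ in hand the remaining verifications are routine. Positivity of $\V$ in $B_R\setminus\{0\}$ follows from the sign arrangement in the construction; the regularity $\V\in C^{1,\alpha}_{\loc}(B_1\setminus\{0\})$ follows from smoothness of $u$ and of $(-\Delta)^s u$ away from the origin (standard regularity of the fractional Laplacian on smooth compactly supported functions); \eqref{eq:G} holds by construction; $L^q(B_1)$-integrability follows from the pointwise bound $|\V(x)|\lesssim |x|^{-2s+\eps}$ combined with boundedness of $\V$ on $\overline{B_1}\setminus B_\rho$ for any $\rho>0$ (choosing $\eps$ large enough, equivalently $\beta$ small enough, to guarantee $q(2s-\eps)<N$); finally \eqref{eq:main_weak} is immediate from the definition of $\V$, since $\V u\varphi\in L^1(B_1)$ for every $\varphi\in C_c^\infty(B_1)$ thanks to \eqref{eq:G}.

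The main obstacle is precisely the engineering of the profile in the previous paragraph: cancelling the critical $|x|^{-2s}$-contribution to $\V$ while preserving both \eqref{limitcondition} and positivity. This is the heart of the theorem -- in the local case $s=1$, unique continuation essentially forbids this combination, making the analogous statement false; in the fractional case, the nonlocality of $(-\Delta)^s$ opens up extra resonant modes (log-modulated powers, obtained by $\sigma$-derivatives of the power identity) that provide the flexibility needed to perform the cancellation, and orchestrating it together with the sign constraints is the technical core of the proof.
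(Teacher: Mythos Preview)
Your plan is fundamentally different from the paper's and, as written, has a genuine gap at precisely the step you flag as the ``main obstacle'': engineering an explicit profile $u$ so that $\V=Lu/u$ is subcritical in the sense of \eqref{eq:G}. No finite combination of powers and log-modulated powers achieves this. You already observe that $u=|x|^{\gamma_\lambda+\beta}\eta$ yields $\V\sim(\Psi_s(\gamma_\lambda+\beta)-\lambda)|x|^{-2s}$ with a nonzero (in fact negative for small $\beta>0$, so also violating $\V>0$) coefficient. Your proposed cure, the resonant mode $|x|^{\gamma_\lambda}\log|x|$, cannot enter $u$ at all because it violates \eqref{limitcondition}: $|x|^{-\gamma_\lambda}\cdot|x|^{\gamma_\lambda}\log|x|=\log|x|\to-\infty$. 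Placing the log at a higher level, $u=|x|^{\gamma_\lambda+\beta}(1+b\log|x|)\eta$, preserves \eqref{limitcondition} but a direct computation still gives $\V\sim(\Psi_s(\gamma_\lambda+\beta)-\lambda)|x|^{-2s}$, with no cancellation. More conceptually, \Cref{cor:nonlocal_radial} forces the vanishing order of any radial solution of \eqref{eq:main} with $\V$ satisfying \eqref{eq:G} to lie in the discrete set $\{\gamma_n^s(\lambda)\}_{n\ge 1}$; an admissible $u$ must hit one of these orders \emph{and} carry a full non-explicit tail making $Lu/u$ subcritical, which your ansatz does not supply.

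The paper sidesteps this by an indirect construction. It takes two distinct radial solutions $u_1,u_2$ of the semilinear problem $Lu=|u|^{p-2}u$ in $B_1$ (\Cref{multiplicity}); for these the potential $|u_i|^{p-2}\sim|x|^{\gamma_i(p-2)}$ is automatically subcritical since $\gamma_i(p-2)>-2s$. If some $u_i$ already has $\gamma_i>\gamma_\lambda$, a rescaling of $u_i$ does the job. Otherwise both satisfy $r^{-\gamma_\lambda}u_i(r)\to\kappa_i>0$; one rescales $u_2\mapsto u_{2,\delta}$ to match $\kappa_1$, sets $w=u_1-u_{2,\delta}$, and takes $\V$ to be the difference quotient $h(u_1,u_{2,\delta})$ of $t\mapsto|t|^{p-2}t$, so that $Lw=\V w$ with $\V$ subcritical and positive near $0$, while \eqref{limitcondition} holds by construction. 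The genuinely nonlocal ingredient is then the unique continuation principle for $(-\Delta)^s$ (\Cref{thm:UCP}): if $w$ vanished on an open set one would force $u_1\equiv u_{2,\delta}$, a contradiction. Your outline never invokes this, relying instead on strict positivity of $u$ in $B_R\setminus\{0\}$---which is exactly what an explicit construction cannot deliver simultaneously with \eqref{eq:G} and \eqref{limitcondition}.
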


  Further estimates about the behavior at the origin of the potential $\V$ in the above theorem can also be obtained, depending on $\lambda, s, N$, as will be seen from the proof.  
 In case of absence of the Hardy potential, i.e. $\lambda=0$, we state the following variant.  
\begin{theorem}\label{thm:main_2}
	There exists a   radial   potential   $\V \in C^1(B_1\setminus\{0\})\cap C^0(B_1)$ satisfying \eqref{eq:G}, such that $\V>0$ in $B_1\setminus\{0\}$ and satisfying the following property: there is a nontrivial radial weak solution $u\in \Dsr$ to  
	\begin{equation*}
		(-\Delta)^s u=\V u\quad\text{in }B_1
	\end{equation*}
	 such that
	\begin{equation*}
		u(0)=0.
	\end{equation*}
	Moreover, $u\equiv 0$ in $\R^N\setminus B_R$, for some $R\geq 1$, and $u\not\equiv 0$ in $\Omega$ for any non-empty open set $\Omega\sub B_R$. Finally, 
	\begin{itemize}
		\item if $N\geq 6s$ and $0<\alpha<4s/(N-2s)$, we can take $\V\in C^{0,\alpha}_{\loc}(B_1)$;
		\item if $N<6s$, we can take $\V\in C^1(B_1)$.
	\end{itemize}
\end{theorem}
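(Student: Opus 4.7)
The plan is a reverse engineering strategy: first construct a specific radial function $u$ with the prescribed vanishing, support, and integrability properties; then set $\V := (-\Delta)^s u / u$ on $B_1 \setminus \{0\}$, and verify that $\V$ extends to a positive function on $B_1$ with the claimed regularity.

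For the construction of $u$, I would fix $R > 1$ and take $u = -\phi_1 + t\,\phi_2$, where $\phi_1, \phi_2 \in C_c^\infty(\R^N)$ are radial nonnegative bumps with disjoint supports: $\mathrm{supp}\,\phi_1 \subseteq \overline{B_{r_0}}$ for some $r_0 \in (1, R)$, with $\phi_1(0) = 0$ and $\phi_1 > 0$ on $B_{r_0} \setminus \{0\}$, while $\mathrm{supp}\,\phi_2 \subseteq \overline{B_R} \setminus B_{r_1}$ for some $r_1 \in (r_0, R)$. The scalar $t > 0$ is then chosen to enforce the \emph{balance condition}
\[
    \int_{\R^N}\frac{u(y)}{|y|^{N+2s}}\,dy = 0,
\]
which, combined with $u(0) = 0$, guarantees $(-\Delta)^s u(0) = 0$ through the identity $(-\Delta)^s u(0) = -C_{N,s}\int u(y)/|y|^{N+2s}\,dy$. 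By design, $u < 0$ on $B_1 \setminus \{0\}$, $u > 0$ on $B_R \setminus \overline{B_{r_1}}$, and the zero set of $u$ in $B_R$ has empty interior.

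Setting $f := (-\Delta)^s u$, $f$ is radial and smooth outside the origin, and the balance condition yields $f(0) = 0$. Radial symmetry gives $\nabla u(0) = \nabla f(0) = 0$, hence $u(x) = u_2 |x|^2 + O(|x|^4)$ with $u_2 < 0$, and $f$ admits an analogous quadratic expansion near $0$. Consequently, $\V := f/u$ is $C^1$ on $B_1 \setminus \{0\}$ and extends continuously to $0$ with $\V(0) = f_2/u_2$; the growth condition \eqref{eq:G} is then automatic from boundedness of $\V$ near $0$. Positivity of $\V$ in $B_1 \setminus \{0\}$ reduces to showing $f$ and $u$ have matching signs: the positive lobe $t\phi_2$ lies at distance at least $r_1 > 1$ from $B_1$, so its contribution to $(-\Delta)^s u(x)$ for $x \in B_1$ is uniformly bounded in terms of $r_1$, $t$, and $\phi_2$; the near-field contribution from $-\phi_1$ is strictly negative and bounded below in absolute value. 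Suitable tuning of amplitudes and radii then yields $\mathrm{sgn}(f) = \mathrm{sgn}(u)$ throughout $B_1 \setminus \{0\}$ and $\V(0) > 0$.

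The regularity dichotomy at the origin comes from a refined analysis of the behavior of $f$ at $0$: beyond the smooth Taylor terms, the leading nonlocal correction produced by the quadratic part of $u$ has order $|x|^{2+\sigma}$ with exponent $\sigma = 4s/(N-2s) = 2^*_s - 2$, giving $\V(x) = \V(0) + O(|x|^\sigma)$. When $N < 6s$, $\sigma > 1$ and the smooth polynomial part of the expansion dominates, yielding $\V \in C^1(B_1)$; when $N \geq 6s$, $\sigma \leq 1$ and the construction only produces $\V \in C^{0,\alpha}_\loc(B_1)$ for any $\alpha < 4s/(N-2s)$. The main obstacle is the sharp identification of this nonlocal correction exponent $\sigma$ in the expansion of $(-\Delta)^s u$ at $0$; a secondary but delicate step is the global sign matching of $f$ and $u$ in $B_1 \setminus \{0\}$, which constrains the geometry and relative weights of the two lobes $\phi_1, \phi_2$.
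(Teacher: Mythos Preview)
Your approach is fundamentally different from the paper's and, as written, contains two concrete gaps.

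\textbf{The nonvanishing claim fails by construction.} You take $\supp\phi_1\subseteq\overline{B_{r_0}}$ and $\supp\phi_2\subseteq\overline{B_R}\setminus B_{r_1}$ with $r_0<r_1$, so $u\equiv 0$ on the open annulus $B_{r_1}\setminus\overline{B_{r_0}}\subset B_R$. This directly contradicts the conclusion ``$u\not\equiv 0$ in $\Omega$ for any non-empty open $\Omega\subseteq B_R$.'' You could try to repair this by letting $r_0=r_1$, but then the supports are no longer disjoint and the sign analysis you sketch needs to be redone.

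\textbf{The regularity explanation is incorrect.} Since $\phi_1,\phi_2\in C_c^\infty(\R^N)$, both $u$ and $f=(-\Delta)^su$ are $C^\infty$ on all of $\R^N$; $f$ is a smooth radial function whose Taylor expansion at the origin contains only even powers of $|x|$, and there is no ``nonlocal correction of order $|x|^{2+\sigma}$'' with $\sigma=4s/(N-2s)$. If the balance condition gives $f(0)=0$ and the quadratic coefficients $u_2,f_2$ are nonzero, then $\V=f/u$ is $C^\infty$ near $0$ regardless of whether $N\gtrless 6s$. So the dichotomy you describe simply does not arise in your construction (which would, if it worked, give a stronger conclusion than stated --- but the mechanism you invoke is fictitious). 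The sign-matching step you flag as ``delicate'' is also left essentially unproven.

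For comparison, the paper does not build $u$ by hand. It takes two distinct radial solutions $u_1,u_2$ of the subcritical problem $(-\Delta)^su=|u|^{p-2}u$ in $B_1$, $u=0$ outside (produced by Lusternik--Schnirelman theory), rescales one so that the leading coefficients at $0$ in the Fall--Felli expansion match, and sets $w=u_1-u_{2,\delta}$. Then $w(0)=0$ and $(-\Delta)^sw=\V_0 w$ in a small ball, with $\V_0=h(u_1,u_{2,\delta})$ the divided difference of $t\mapsto|t|^{p-2}t$. The nonvanishing on every open subset is obtained not by inspection but from the unique continuation result of Ghosh--Salo--Uhlmann (\Cref{thm:UCP}): if $w$ vanished on an open set one would force $u_1\equiv u_2$, a contradiction. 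The regularity dichotomy comes from $h\in C^{0,p-2}_{\loc}(\R^2)$ for $p\le 3$ versus $h\in C^1(\R^2)$ for $p>3$, together with the subcriticality constraint $p-2<2^*_s-2=4s/(N-2s)$; this is the genuine origin of the exponent $4s/(N-2s)$.
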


We observe that our main result \Cref{thm:main_1} stands in stark contrast with respect to other situations   in which uniqueness of solutions is known. The first evident discrepancy is with the classical case $s=1$: indeed, in this case the local condition
\[
	\lim_{x\to 0}|x|^{-\gamma_\lambda} u(x)=0,
\]
which  boils down to $u(0)=0$ if $\lambda=0$, forces any radial solution of
\[
	-\Delta u-\frac{\lambda}{|x|^2}u=\V u\quad\text{in }B_1
\]
to vanish identically. We will check this in \Cref{cor:local_radial}.

Concerning the fractional case, in the breakthrough \cite{FLS} the authors proved that for bounded radially decreasing $\V$ there exists at most one radial (variational) solution of
\[
	(-\Delta )^s u=\V u \quad\text{in }\R^N
\]
which vanishes at the origin,   i.e., the trivial solution.  

Also in the nonlinear case uniqueness results are available. For instance, uniqueness of ground states for some particular (power-type) nonlinear problems in the whole $\R^N$ has been proved in \cite{FL} (for $N=1$) and \cite{FV,FLS,CGHMV} (in higher dimension). Finally, we mention \cite{fall2024nonradial,delatorre2024uniqueness}, where the authors proved uniqueness results for fractional nonlinear problems in balls, with homogeneous Dirichlet exterior conditions.  

On the contrary, with our main results we prove that there are cases (namely, potentials) in which (local) uniqueness of radial solutions of \eqref{eq:main} does not hold: in fact, our approach consists in building such counterexamples.   We stress however that our results do not cover problems posed in the whole space, and in particular the validity of the result in \cite{FLS} in absence of the monotonicity assumption on $\V$ remains an open question.    

 Let us briefly describe our approach. First of all, in \Cref{sec:multiplicity}, by making use of abstract critical points theory, we check the existence of infinitely many \emph{distinct radial} solutions to the nonlinear problem
\begin{equation}\label{nonlinearp}
	\begin{bvp}
		(-\Delta)^su-\frac{\lambda}{|x|^{2s}}u&= |u|^{p-2}\,u&&\mbox{in $B_1$}, \\
		u&=0, &&\mbox{in }\R^N\setminus B_1
	\end{bvp}
\end{equation}
for $2<p<2^*_s$. In view of the results established in \cite{Fall2014} (which are   crucial, mostly in the case $\lambda\neq 0$), we are able to describe the local behavior near the origin of radial solutions of \eqref{nonlinearp}, and this is done \Cref{sec:nonlocal_behvaior}. In particular, 
 such solutions are $C^1(B_1)$ if $\lambda=0$  and so their value at $0$ is finite in such case. We complete the proof of the main theorems in \Cref{sec:main}. More precisely, by taking into consideration two distinct nontrivial radial solutions of \eqref{nonlinearp} and by suitably scaling them and taking the difference, we produce a radial solution $w$ of \eqref{eq:main} for some suitable  radial potential $\V$ possessing  the properties displayed in \Cref{thm:main_1} and \Cref{thm:main_2} (thanks to the regularity results of \Cref{sec:nonlocal_behvaior}). By construction such a solution satisfies $\lim_{x\to 0}|x|^{-\gamma_\lambda} w(x)=0$.    Moreover, it will be shown to be  non-trivial by invoking the the fact that if a function vanishes on an open set together with its fractional Laplacian then it must be identically $0$. 
   The latter unique continuation property, not holding for the classical Laplacian, will be recalled in  \Cref{thm:UCP} below.  

\section{Infinitely many radial solutions to nonlinear problems}\label{sec:multiplicity}
Let $\lambda<\Lambda_{N,s}$ and $1<p<2_s^*$, $p\neq 2$. We consider the  subcritical problem \eqref{nonlinearp} for the fractional Laplacian with Hardy potential,
with the aim of proving existence of infinitely many radial solutions. In order to give the definition of solution (in a weak sense), we first introduce the space 
\[
H^s_0(B_1):=\{u\in \mathcal D^{s,2}(\R^N): u\equiv 0 \mbox{ in $\R^N\setminus B_1$}\}. 
\]
 
Clearly, 
\[
H^s_0(B_1)\subset H^s(\mathbb R^N):=\mathcal D^{s,2}(\R^N)\cap L^2(\mathbb R^N).
\]
 
Moreover, we define
$
H^s_{0,\rad}(B_1):=\Dsr\cap H^s_0(B_1).
$
Now, we say that $u\in \Ds$ is a \emph{radial solution} of \eqref{nonlinearp} if $u\in H^s_{0,\rad}(B_1)$ and
\[
(u,\varphi)_{\Ds}-\lambda\int_{B_1}\frac{u\varphi}{|x|^{2s}}\dx=\int_{B_1}|u|^{p-2}u\varphi\dx\quad\text{for all }\varphi\in C_c^\infty(B_1).
\]

\begin{proposition}\label{multiplicity}
	Let $\lambda<\Lambda_{N,s}$ and let $1<p< 2_s^*$, $p\neq 2$. Then there exist infinitely many distinct radial  solutions of \eqref{nonlinearp}.  
\end{proposition}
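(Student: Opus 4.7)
The plan is to use variational methods on the radial subspace $H^s_{0,\rad}(B_1)$. Define the energy functional
\[
I(u)=\frac12\left(\|u\|_{\Ds}^2-\lambda\int_{B_1}\frac{u^2}{|x|^{2s}}\dx\right)-\frac1p\int_{B_1}|u|^p\dx,\qquad u\in H^s_{0,\rad}(B_1).
\]
Since $\lambda<\Lambda_{N,s}$, the fractional Hardy inequality \eqref{eq:hardy} ensures that the quadratic part $q(u):=\|u\|_{\Ds}^2-\lambda\int_{B_1}u^2|x|^{-2s}\dx$ defines a norm on $H^s_{0,\rad}(B_1)$ which is equivalent to $\|\cdot\|_{\Ds}$. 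Moreover, for $1<p<2^*_s$ the embedding $H^s_0(B_1)\hookrightarrow L^p(B_1)$ is compact, so $I$ is of class $C^1$ and even. By the principle of symmetric criticality of Palais, every critical point of $I$ on $H^s_{0,\rad}(B_1)$ is a radial weak solution of \eqref{nonlinearp}; hence it suffices to produce infinitely many critical points of $I$.

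For the case $2<p<2^*_s$, I would appeal to the symmetric mountain pass / Fountain theorem of Ambrosetti--Rabinowitz. The Palais--Smale condition at every level follows from the coercivity of $q$ combined with the compact embedding into $L^p(B_1)$: any PS sequence is bounded in $H^s_{0,\rad}(B_1)$ because $I(u_n)-\frac1p\langle I'(u_n),u_n\rangle=(\tfrac12-\tfrac1p)q(u_n)$ is bounded, and strong convergence in $L^p$ upgrades weak convergence to strong convergence in $H^s_{0,\rad}$ via the equivalence of norms. The mountain pass geometry is standard: $I(u)\ge c\|u\|^2_{\Ds}-C\|u\|^p_{\Ds}$ is positive on small spheres, while on any finite-dimensional subspace $V_k$ of $H^s_{0,\rad}(B_1)$ (all norms being equivalent in finite dimension) the $L^p$ term forces $I$ to be negative outside a large ball. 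Choosing an increasing chain of such $V_k$ inside $H^s_{0,\rad}(B_1)$ and applying the Fountain theorem yields critical values $c_k\to+\infty$, hence infinitely many distinct radial solutions.

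For the case $1<p<2$, the functional is bounded below and coercive since $\int_{B_1}|u|^p\dx\le C\|u\|^p_{\Ds}$ with $p<2$. In this regime I would instead apply the symmetric version of Clark's theorem (equivalently, Ljusternik--Schnirelman / Krasnoselskii genus for even functionals bounded below). One checks that for each $k\in\N$ there exists a $k$-dimensional subspace $V_k\subset H^s_{0,\rad}(B_1)$ and a radius $\rho_k>0$ on which $I\restr{S(V_k,\rho_k)}<0$ (again by equivalence of norms on $V_k$ and the fact that the $|u|^p$ term is of lower order than the quadratic one for small $u$). The genus of the sublevel set $\{I\le -\eta\}$ is then at least $k$ for some $\eta>0$, and the deformation/minimax argument produces infinitely many pairs $\pm u_k$ of critical points with $I(u_k)\to 0^-$ and $u_k\neq 0$, which are therefore distinct.

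The main technical obstacle in both cases is the verification of the Palais--Smale condition and the uniform control of PS sequences; this is precisely where both the compact embedding $H^s_{0,\rad}(B_1)\hookrightarrow L^p(B_1)$ (subcriticality is essential) and the coercivity of $q$ granted by $\lambda<\Lambda_{N,s}$ via \eqref{eq:hardy} come together. Once these are in place, the assembly into the Fountain / Clark framework is routine, and the statement follows.
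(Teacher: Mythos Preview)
Your argument is correct, and the ingredients you invoke---the Hardy inequality \eqref{eq:hardy} to obtain an equivalent norm, the compact subcritical embedding, the principle of symmetric criticality, and the Palais--Smale verification---are exactly the ones the paper uses. However, the overall variational scheme is different. The paper does not work with the full energy $I$ and does not split into the cases $p>2$ and $p<2$. Instead, it considers the single functional $\mathcal{I}(u)=\tfrac1p\int_{B_1}|u|^p$, restricts it to the unit sphere $S_{s,\lambda}=\{u\in H^s_{0,\rad}(B_1):\|u\|_{s,\lambda}=1\}$, checks that $\mathcal{I}'$ is a compact map so that $\mathcal{I}_{s,\lambda}:=\mathcal{I}|_{S_{s,\lambda}}$ satisfies $(PS)_c$ for every $c>0$, and then applies the infinite-dimensional Lusternik--Schnirelman theorem \cite[Theorem~8.10, Remark~8.12]{R} to this constrained, even $C^1$ functional. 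A Lagrange-multiplier computation and a rescaling then convert each constrained critical point into a solution of \eqref{nonlinearp}. The advantage of the paper's route is that it treats the sublinear and superlinear regimes uniformly with a single abstract theorem; the advantage of yours is that the Fountain and Clark theorems are perhaps more widely quoted and give explicit information on the behavior of critical levels ($c_k\to+\infty$ for $p>2$, $c_k\to 0^-$ for $p<2$), which the paper does not extract.
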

\begin{proof}
	We endow the Hilbert space $H^s_{0,\,\rad}(B_1)$, which is a closed subspace of $\mathcal D^{s,2}_{\rad}(\R^N)$, with the equivalent norm $\norm{\cdot}_{s,\lambda}$ induced by the scalar product
	\[
	(u,v)_{s,\lambda}:=(u,v)_{\Ds}-\lambda\int_{\R^N}\frac{uv}{|x|^{2s}}\dx,\quad u,v\in H^s_{0,\rad}(B_1),
	\]
	and we define the even functional $\mathcal I:H^{s}_{0,\,\rad}(B_1)\to\mathbb R$ by
	\[
	\mathcal I(u):=\frac1p\int_{B_1} |u|^p\dx.
	\]
	Indeed, $\norm{\cdot}_{s,\lambda}$ is equivalent to $\norm{\cdot}_{\Ds}$ in view of the Hardy inequality \eqref{eq:hardy}, being $\lambda<\Lambda_{N,s}$.
	
	We preliminarily recall that
	\begin{equation}\label{eq:compact}
		H^s_{0,\rad}(B_1)\hookrightarrow L^q(B_1)\quad  \text{compactly for }1<q<2^*_s
	\end{equation}
	and from this we immediately derive continuity of $\mathcal{I}$   with respect to the weak convergence in $ H^s_{0,\rad}(B_1)$ . Let us now prove that $\mathcal{I}\in C^1(H^s_{0,\rad}(B_1))$. Let $(u_n)_{n\in\N}\sub H^s_{0,\rad}(B_1)$ and $u\in H^s_{0,\rad}(B_1)$ be such that
	\[
	u_n\weak u\quad\text{weakly in }H^s_{0,\rad}(B_1)~\text{as }n\to\infty.
	\]
	Since
	\begin{multline*}
		\sup_{\substack{\varphi\in H^s_{0,\rad}(B_1) \\ \|\varphi\|_{s,\lambda}\le 1}}\int_{\R^N}||u_n|^{p-2}u_n-|u|^{p-2}u||\varphi|\dx \\
		\le\left(\int_{\R^N}||u_n|^{p-2}u_n-|u|^{p-2}u|^{\frac{p}{p-1}}\dx\right)^{\frac{p-1}{p}}\sup_{\substack{\varphi\in H^s_{0,\rad}(B_1) \\ \|\varphi\|_{s,\lambda}\le 1}}\left(\int_{\R^N} |\varphi|^p\dx \right)^\frac1p,
	\end{multline*}
	where the supremum in the right hand side is finite again due to \eqref{eq:compact}, we deduce that 
	\begin{equation}\label{dualnorm}
		\lim_{n\to+\infty}\sup_{\|\varphi\|_{s,\lambda}\le 1}\int_{\R^N}||u_n|^{p-2}u_n-|u|^{p-2}u||\varphi|=0,
	\end{equation}
	as $u_n \to u$ in $L^p(\R^N)$ implies $|u_n|^{p-2}u_n\to |u|^{p-2}u$ in $L^{p/(p-1)}(\R^N)$.
	If we now denote by $\mathcal I'(v)\in (H^{s}_{0,\,\rad}(B_1))'$ the Fr\'echet differential of $\mathcal I$ at $v\in H^s_{0,\,\rad}(B_1)$, we have that
	\[
	\mathcal{I}'(u)[\varphi]=\int_{B_1}|v|^{p-2}v\varphi\dx\quad\text{for all }\varphi\in H^s_{0,\rad}(B_1).
	\]
	Hence, \eqref{dualnorm} shows that the operator 
	\begin{equation}\label{oper}\begin{aligned}
		H^s_{0,\,\rad}(B_1) &\to (H^s_{0,\,\rad}(B_1))' \\
		v&\mapsto \mathcal{I}'(v)
	\end{aligned}
	\end{equation}
	is compact, since it shows $\mathcal I'(u_n)\to\mathcal I'(u)$ in $(H^s_{0,\,\rad}(B_1))'$.
	In particular, $\mathcal I\in C^1(H^s_{0,\,\rad}(B_1))$.
	
	Now, let $\mathcal I_{s,\lambda}$ be the restriction of $\mathcal I$ to 
	$$S_{s,\lambda}:=\{u\in H^{s}_{0,\,\rad}(B_1): \|u\|_{s,\lambda}=1\}$$
	and let $\mathcal I'_{s,\lambda}(u)$ be the differential of $\mathcal I_{s,\lambda}$ at $u\in S_{s,\lambda}$, i.e., the restriction of the differential $\mathcal I'(u)$ at $u$ on the tangent space at $S_{s,\lambda}$ at $u$ :
\[
T_{u}(S_{s,\lambda})=\left\{\psi\in H^{s}_{0,\,\rad}(B_1):\,( u,\psi)_{s,\lambda}=0 \right\}.
\]
Therefore, for any test function $\varphi\in H^{s}_{0,\,\rad}(B_1)$ we find
\begin{equation}\label{tangential}
		\mathcal I'_{s,\lambda}(u)[\varphi_{tan}]=\mathcal I'(u)[\varphi]-\mathcal I'(u)[u]( u,\varphi)_{s,\lambda},\qquad\mbox{$\varphi\in H^{s}_{0,\,\rad}(B_1)$},
	\end{equation}
where $\varphi_{tan}:=\varphi-( u,\varphi)_{s,\lambda}u\in T_{u}(S_{s,\lambda})$ is the \emph{tangential} component of $\varphi$.
	
	  The proof will be concluded by invoking the infinite-dimensional Lusternik-Schnirelman theorem from \cite[Theorem 8.10, Remark 8.12]{R}. In order to apply it, we need to verify the validity of a Palais-Smale condition. In particular, we shall check that $\mathcal I_{s,\lambda}$ satisfies the $(PS)_c$ condition for every level $c>0$, that is, if $c>0$ and $( u_n)_{n\in\mathbb N}\subset S_{s,\lambda}$ is a sequence such that 
	\begin{equation}\label{PSC}\mathcal I_{s,\lambda}( u_n)\to c\qquad\mbox{and} \qquad \mathcal I_{s,\lambda}'( u_n)\to0\;\; \mbox{in}\;\; (T_{u}(S_{s,\lambda}))'\qquad\mbox{as $n\to+\infty$},\end{equation} then $( u_n)_{n\in\mathbb N}$ admits a subsequence that converges in $H^s_{0,\rad}(B_1)$.
	Suppose therefore that $c\in(0,+\infty)$ and  that $( u_n)_{n\in\mathbb N}\subset S_{s,\lambda}$ is a sequence that satisfies \eqref{PSC}.   Since 
	 $(u_n)_{n\in\N}$ is bounded in $H^s_{0,\rad}(B_1)$, by recalling \eqref{eq:compact} there exists $u\in H^s_{0,\rad}(B_1)$ such that up to extracting a subsequence $u_n$ converge to $u$ weakly in    $H^s_{0,\rad}(B_1)$   and
		{strongly in }$L^q(  B_1 )$ {for every }$1<q<2^*_s$
	as $n\to\infty$. Thanks to this and to the fact that $\mathcal{I}(u_n)=\mathcal{I}_{s,\lambda}(u_n)\to c$ as $n\to\infty$, we deduce 
	\begin{equation}\label{eq:c}
		\mathcal{I}(u)=c
	\end{equation}  
	and 
	\begin{equation}\label{eq:pc}
		\lim_{n\to+\infty}\mathcal{I}'(u_n)[u_n]=\mathcal I'(u)[u]=p\mathcal I(u)=pc
	\end{equation}
	Notice that from \eqref{tangential} we have
	\[
	\mathcal I'_{s,\lambda}(u_{n})[\varphi_{tan}]=\mathcal I'(u_{n})[\varphi]-\mathcal I'(u_{n})[u_{n}]( u_{n},\varphi)_{s,\lambda}
	\]
	thus from  \eqref{PSC}
	\[
	\lim_{n\to+\infty}\mathcal I'(u_n)[\varphi]=\lim_{n\to+\infty}\mathcal I'(u_n)[u_n]( u_n,\varphi)_{s,\lambda}.
	\]
	 Hence from the compactness of the operator in \eqref{oper}, along with  \eqref{eq:pc}  we obtain that 
	\[
	\mathcal{I}'(u)[\varphi]=\lim_{n\to+\infty}\mathcal I'(u_n)[\varphi]=\lim_{n\to+\infty}\mathcal I'(u_n)[u_n]( u_n,\varphi)_{s,\lambda}=pc(u,\varphi)_{s,\lambda}\quad\text{for all }\varphi\in H^s_{0,\rad}(B_1). 
	\]
	Hence, by choosing $\varphi=u$ and recalling \eqref{eq:c}-\eqref{eq:pc} we obtain that 
	\[
	\norm{u}_{s,\lambda}^2=\frac{1}{pc}\mathcal{I}'(u)[u]=\frac{1}{c}\mathcal{I}(u)=1=\norm{u_n}_{s,\lambda}^2,
	\]
	thus implying that $u_n\to u$ strongly in $H^s_{0,\rad}(B_1)$, so that $(PS)_c$ hods true. Therefore, since $\mathcal I\in C^1(H^s_{0,\,\rad}(B_1))$ is an even functional and since $\mathcal I_{s,\lambda}$ satisfies $(PS)_c$ for every $c>0$,   \cite[Theorem 8.10]{R} shows the existence of infinitely many critical points for $\mathcal I_{s,\lambda}$ (by \cite[Remark 8.12]{R}, the validity of $(PS)_c$ at every level $c>0$, excluding the level $c=0$, is enough to conclude).  
	
	By \eqref{tangential}, each of the obtained nontrivial critical points $u\in H^s_{0,\,\rad}(B_1)$ of $\mathcal I_{s,\lambda}$ satisfies
	$$( u,\varphi)_{s,\lambda}=\delta\mathcal I'(u)[\varphi]\quad\text{for every }\varphi\in H^s_{0,\,\rad}(B_1)$$
	where
	\[
	\delta:=\left(\int_{B_1}|u|^p\dx\right)^{-1},
	\]
	that is
	\begin{equation}\label{radialtest}
		(u,\varphi)_{\mathcal D^{2,s}(\R^N)}-\lambda\int_{\R^N}\frac{u\varphi}{|x|^{2s}}=\delta\int_{\R^N}|u|^{p-2}u\varphi \qquad\mbox{for every $\varphi\in H^s_{0,\,\rad}(B_1)$}.
	\end{equation}In particular, 
	the relation \eqref{radialtest}
	shows that the restriction to $H^s_{0,\,\rad}(B_1)$ of the functional $\mathcal F:H^s_{0}(B_1)\to \R$ defined by $$\mathcal F(\zeta):=\frac12\|\zeta\|^2_{s,\lambda}-\frac \delta p \int_{\R^N}|\zeta|^p$$ has a critical point at $u$.
	Taking advantage of the fact that functional $\mathcal F$ is invariant under the action of the group of rotations, i.e., $\mathcal F(\zeta_{\mathsf R})=\mathcal F(\zeta)$ for any rotation matrix $\mathsf R$, where $\zeta_\mathsf R(x):=\zeta(\mathsf Rx)$,
	\eqref{radialtest} holds in fact  for every $\varphi\in C_c^\infty(B_1)$, by the principle of symmetric criticality, see \cite[Theorem 2.2]{KO}.
	Therefore $\delta^{\frac{1}{2-p}} u\in H^{s}_{0,\,\rad}(B_1)$ is a radial solution of \eqref{nonlinearp}.
\end{proof}

\begin{remark}\rm
 
The result of Proposition \ref{multiplicity} has been established for radial solutions to 
\[
(-\Delta)^su+u=|u|^{p-2}u\quad\mbox{in $\R^N$}
\]
in \cite[Theorem 6.1]{NS}, for $2<p<2_s^*$, by using the same techniques.
\end{remark}

\begin{remark}\rm
The case of the classical Laplace operator can be treated in the same way, for instance obtaining 
the existence of infinitely many radial solutions for the Dirichlet problem
\begin{equation}\label{struwe}\left\{\begin{array}{ll}-\Delta u =|u|^{p-2}u\quad&\mbox{in $B_1$}\\
u=0\quad&\mbox{on $\partial B_1$},\end{array}\right.\end{equation}
 for $2<p<\tfrac{2N}{N-2}$, $N\ge 3$. Radial solutions to $-\Delta u =|u|^{p-2}u$ are governed by the ODE in the radial variable $r$
\begin{equation*}\label{radialode}
-u''-\frac{N-1}{r}u'=|u|^{p-2}u, \qquad u'(0)=0.
\end{equation*}
which
admits a unique solution on $[0,+\infty)$ for every prescribed initial datum $u(0)\in \mathbb R$, see \cite[Theorem 4]{HW}, see also \cite{S}: it is clearly the trivial solution if $u(0)=0$, and it has infinitely many sign changes otherwise. Note that such solutions are (up to sign change) a rescaling of each other. Indeed, if $u$ is a solution to $-\Delta u =|u|^{p-2}u$ in $\mathbb R^N$, then also $u_\delta(x):=\delta u(\delta^{\frac{p-2}{2}}x)$ is, for every $\delta>0$. Moreover, it is standard to check by ODE methods that any radial solution to \eqref{struwe} can be extended in the whole $\mathbb R^N$. In particular, radial solutions to the Dirichlet problem \eqref{struwe} are rescalings each other.
This is a crucial difference with the nonlocal case, since the unique continuation property of the fractional Laplacian prevents two distinct $\mathcal D^{s,2}(\mathbb R^N)$ radial solutions of \eqref{nonlinearp} to be the rescaling of each other (as we shall detail in the proofs of  \Cref{sec:main}),  allowing us to prove our main results.
\end{remark}

\section{Behavior of radial solutions near the origin}

In this section, we collect some known results concerning the local behavior of solutions to a certain class of (local and nonlocal) second-order elliptic PDEs and we then derive the local behavior for radial solutions. We point out that the content of this section is essentially contained in \cite{FFT} and \cite{Fall2014}; however, for the sake of clarity, we state the results in our framework and we provide some proofs. It is nowadays well known that the local behavior of solutions to PDEs, in many cases, is governed by eigenvalues and eigenfuctions of some suitable corresponding spherical operator.    In the the local case $s=1$ the spherical eigenvalue problem is posed $\mathbb{S}^{N-1}$, in the fractional case it is posed in $\mathbb{S}^N_+$, the upper half-sphere in one more dimension, as we next describe. This difference
among   the fractional and non-fractional framework is the core of the question of (local) uniqueness/non-uniqueness of radial solutions. We split the analysis in the two cases $s=1$ and $0<s<1$.

\subsection{\texorpdfstring{The case $s=1$}{The case s=1}}

Here in this section,   concerning the case $s=1$, we let $N> 2$ and we fix  
\begin{equation*}
	\lambda<\left(\frac{N-2}{2}\right)^2
\end{equation*}
and $\V\in L^\infty_{\loc}(B_1\setminus\{0\})$ satisfying
\begin{equation}\label{eq:G_1}\tag{G1}
	|\V(x)|\leq C_\V|x|^{-2+\eps}\quad\text{as }|x|\to 0,~\text{for some }C_\V,\eps>0
\end{equation}
and we consider weak solutions of the following equation
\begin{equation}\label{eq:local}
	-\Delta u-\frac{\lambda}{|x|^2}u=\V u\quad\text{in }B_1,
\end{equation}
that is, functions $u\in H^1(B_1)$ such that
\begin{equation*}
	\int_{B_1}\left(\nabla u\cdot\nabla\varphi-\frac{\lambda}{|x|^2}u\varphi\right)\dx=\int_{B_1}\V u\varphi\dx\quad\text{for all }\varphi\in C_c^\infty(B_1).
\end{equation*}
The aim of the present section is to recall what is the local behavior of solutions to \eqref{eq:local} and to check that results analogous to \Cref{thm:main_1} and \Cref{thm:main_2} cannot hold when $s=1$. Let us consider the following eigenvalue problem on the sphere
\begin{equation}\label{eq:eigen_local}
	(-\Delta_{\mathbb{S}^{N-1}}-\lambda)\phi=\mu\phi,\quad\text{in }\mathbb{S}^{N-1},
\end{equation}
to be understood in the sense that $\phi \in H^1(\mathbb{S}^{N-1})$ and 
\begin{equation*}
	\int_{\mathbb{S}^{N-1}}(\nabla_{\mathbb{S}^{N-1}}\phi\cdot\nabla_{\mathbb{S}^{N-1}}\psi-\lambda\phi\psi)\ds=\mu\int_{\mathbb{S}^{N-1}}\phi\psi\ds\quad\text{for all }\psi\in H^1(\mathbb{S}^{N-1}).
\end{equation*}
It is evident that this is a translation by $-\lambda$ of the eigenvalue problem for the Laplace-Beltrami operator on the sphere. Hence, \eqref{eq:eigen_local} admits the following sequence of eigenvalues
\begin{equation}\label{eq:eigenvalues_local}
	 \mu_n^1(\lambda):=(n-1)(n+N-3)-\lambda,\quad n\in\N, 
\end{equation}
where we put the upper index $^1$ since we are working in the case $s=1$. From \cite{FFT} (which treats even more general operators with magnetic potentials) we know that the local behavior of solutions to \eqref{eq:local} are governed by eigenvalues and eigefunctions of \eqref{eq:eigen_local}. More precisely, we have the following.
\begin{theorem}[\cite{FFT}, Theorem 1.3]\label{thm:FFT}
	Let $\lambda<(N-2)^2/4$, let $\V\in L^\infty_\loc(B_1\setminus\{0\})$ satisfy \eqref{eq:G_1} and let $u\in H^1(B_1)\setminus\{0\}$ be a non-trivial weak solution of \eqref{eq:local}. Then there exists $n\in\N$ and an eigenfunction $\phi_n^1\in H^1(\mathbb{S}^{N-1})$ of \eqref{eq:eigen_local} such that
	\begin{equation*}
		r^{-\gamma_n^1(\lambda)}u(rx)\to |x|^{\gamma_n^1(\lambda)}\phi_n^1\left(\frac{x}{|x|}\right)\quad\text{as }r\to 0,~\text{in }C^{1,\alpha}_{\loc}(B_1\setminus\{0\}),
	\end{equation*}
	where
	\begin{equation}\label{eq:gamma_n_1}
		\gamma_n^1(\lambda):=-\frac{N-2}{2}+\sqrt{\left(\frac{N-2}{2}\right)^2+\mu_n^1(\lambda)}
	\end{equation}
	and $\mu_n^1(\lambda)$ is as in \eqref{eq:eigenvalues_local}.
	
\end{theorem}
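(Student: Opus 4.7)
The natural tool is an Almgren-type monotonicity formula. Assuming $u\not\equiv 0$, introduce the height and (perturbed) energy
\[
H(r):=r^{1-N}\!\int_{\partial B_r}\!u^2\,\ds,\qquad E(r):=r^{2-N}\!\int_{B_r}\!\bigl(|\nabla u|^2-\tfrac{\lambda}{|x|^2}u^2-\V u^2\bigr)\dx,
\]
and the frequency $\mathcal{N}(r):=E(r)/H(r)$. A preliminary point is that $H(r)>0$ for all small $r$, which follows by a unique continuation argument; here the hypothesis $\lambda<(N-2)^2/4$ is crucial, since it makes the quadratic form $\varphi\mapsto\int_{B_1}(|\nabla\varphi|^2-\lambda|x|^{-2}\varphi^2)\dx$ coercive on $H^1_0(B_1)$ via the classical Hardy inequality.

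The heart of the argument is the near-monotonicity of $\mathcal{N}$. Differentiating yields $\tfrac12 H'(r)=r^{1-N}\int_{\partial B_r}u\,\partial_\nu u\,\ds$, and a Pohozaev--Rellich identity (testing \eqref{eq:local} against the multiplier $x\cdot\nabla u$) produces
\[
rE'(r)=2r^{2-N}\!\int_{\partial B_r}(\partial_\nu u)^2\,\ds+R(r),
\]
where by \eqref{eq:G_1} one controls $|R(r)|\le Cr^{\eps}H(r)\bigl(1+\mathcal{N}(r)\bigr)$. Combining these with Cauchy--Schwarz on $\int_{\partial B_r}u\,\partial_\nu u\,\ds$ gives a Gronwall-type differential inequality that forces $\mathcal{N}$ to be bounded on some interval $(0,r_0)$ and to possess a finite nonnegative limit $\gamma:=\lim_{r\to 0^+}\mathcal{N}(r)$. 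Integrating the companion identity $(\log H)'(r)=2\mathcal{N}(r)/r + O(r^{\eps-1})$ then yields the sharp asymptotics $H(r)\sim c\,r^{2\gamma}$ for some $c>0$.

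To identify $\gamma$ and extract the profile I would perform a blow-up. Set $v_r(x):=u(rx)/\sqrt{H(r)}$, so $\int_{\partial B_1}v_r^2\,\ds=1$, and observe that $v_r$ solves $-\Delta v_r-\lambda|x|^{-2}v_r=r^{2}\V(rx)\,v_r$ on $B_{1/r}$; by \eqref{eq:G_1}, the right-hand side is bounded by $Cr^{\eps}|x|^{-2+\eps}v_r$ and hence vanishes uniformly on compacta of $\R^N\setminus\{0\}$. The frequency estimate transports to $v_r$ (with the same limit $\gamma$), so up to a subsequence $v_r\weak v_0$ in $H^1_{\loc}(\R^N\setminus\{0\})$; the surface normalization together with the monotonicity prevents $v_0\equiv 0$. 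The limit solves $-\Delta v_0-\lambda|x|^{-2}v_0=0$ with frequency identically $\gamma$, hence is homogeneous of degree $\gamma$. Writing $v_0(x)=|x|^{\gamma}\phi(x/|x|)$ and separating variables reduces the problem to \eqref{eq:eigen_local} for $\phi$ with eigenvalue $\gamma(\gamma+N-2)$, so $\gamma(\gamma+N-2)=\mu_n^1(\lambda)$ for some $n\in\N$, and inverting \eqref{eq:gamma_n_1} gives $\gamma=\gamma_n^1(\lambda)$.

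Finally, one must upgrade the mode of convergence and remove the subsequence. Uniqueness of the blow-up limit follows from a refined monotonicity argument applied to $H(r)/r^{2\gamma}$, pinning down the multiplicative constant in $H(r)\sim c\,r^{2\gamma}$ and showing that the scaling $1/\sqrt{H(r)}$ is asymptotically equivalent to a constant multiple of $r^{-\gamma_n^1(\lambda)}$. Standard interior elliptic estimates on annuli bounded away from the origin, combined with the $L^\infty_\loc$ bound on $\V$ outside the origin, then upgrade the weak $H^1_\loc$ convergence to $C^{1,\alpha}_{\loc}(B_1\setminus\{0\})$ convergence of the full sequence. The main obstacle throughout is the monotonicity step: the Hardy term and the perturbation $\V$ must be handled simultaneously so that the error in the frequency inequality is $r^{\eps-1}$-integrable near $0$, which is precisely what \eqref{eq:G_1} delivers; in particular, the strict gap $\lambda<(N-2)^2/4$ is used both to guarantee coercivity and to ensure $(\tfrac{N-2}{2})^2+\mu_n^1(\lambda)>0$, so that $\gamma_n^1(\lambda)$ is well defined and the limit $\gamma$ is finite.
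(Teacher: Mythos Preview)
The paper does not supply a proof of this theorem: it is quoted verbatim from \cite{FFT} (Theorem~1.3 there) and used as a black box. Your sketch via the Almgren frequency, Pohozaev--Rellich identity, and blow-up is precisely the strategy of the original reference, so there is no discrepancy to report; as a summary of the argument in \cite{FFT} your outline is accurate, with the caveat that the uniqueness of the blow-up limit there is obtained by projecting onto the spherical eigenbasis and analyzing the resulting Fourier coefficients rather than by a bare monotonicity of $H(r)/r^{2\gamma}$.
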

Let us now focus on the case of radial solutions. We denote by
$H^1_\rad(B_1)$ the space of radial functions in $H^1(B_1).$
If $u\in H^1_\rad(B_1)$ is a non-trivial, radial solution of \eqref{eq:local}, it means that the eigenfunction $\phi_n^1$ appearing in \Cref{thm:FFT} must be independent of the angular variable, which means it needs to be constant. However, due to $L^2$ orthogonality, the only constant eigenfunction of \eqref{eq:eigen_local} is the first one, i.e., the one corresponding to   $n=1$.   Indeed, by classical spectral theory, we know that the first eigenvalue   $\mu_1^1(\lambda)=-\lambda$   is simple and has   $\phi_1^1=\mathcal{H}^{N-1}(\mathbb{S}^{N-1})^{-1/2}$    as unique (up to a sign) $L^2$-normalized eigenfunction. Therefore, we have the following corollary, showing that \Cref{thm:main_1} and \Cref{thm:main_2} do not hold if $s=1$.
\begin{corollary}\label{cor:local_radial}
	Let $\lambda<(N-2)^2/4$, let $\V\in L^\infty_{\loc}(B_1\setminus\{0\})$ satisfy \eqref{eq:G_1}	and let $u\in H^1_{\rad}(B_1)\setminus\{0\}$ be a non-trivial, radial weak solution of \eqref{eq:local}
	Then there exists $\kappa\in\R\setminus\{0\}$ such that
	\begin{equation*}
		\lim_{r\to 0}r^{-\gamma_1^1(\lambda)}u(r)=\kappa\quad\text{and}\quad \lim_{r\to 0} r^{-\gamma_1^1(\lambda)+1}u'(r)=\kappa\gamma_1^1(\lambda),
	\end{equation*}
	where
	\begin{equation*}
		\gamma_1^1(\lambda)=-\frac{N-2}{2}+\sqrt{\left(\frac{N-2}{2}\right)^2-\lambda}.
	\end{equation*}
	In particular, if $u\in H^1_{\rad}(B_1)\setminus\{0\}$ is a non-trivial, radial weak solution of
	\begin{equation*}
		-\Delta u=\V u\quad\text{in }B_1,
	\end{equation*}
	then there exists $\kappa\in\R\setminus\{0\}$ such that $\lim_{r\to 0}u(r)=\kappa$.
\end{corollary}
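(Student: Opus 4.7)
The plan is to feed $u$ into \Cref{thm:FFT} and exploit its radiality to identify exactly which eigenfunction can appear in the asymptotic profile; the rest follows from a simple spectral observation and the chain rule.

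First I would apply \Cref{thm:FFT} to the non-trivial radial solution $u\in H^1_{\rad}(B_1)\setminus\{0\}$, obtaining $n\in\N$ and a non-trivial eigenfunction $\phi_n^1\in H^1(\mathbb{S}^{N-1})$ of \eqref{eq:eigen_local} such that
\[
	r^{-\gamma_n^1(\lambda)}u(rx)\longrightarrow |x|^{\gamma_n^1(\lambda)}\phi_n^1\!\left(\frac{x}{|x|}\right)\qquad\text{as }r\to 0^+,\quad\text{in }C^{1,\alpha}_{\loc}(B_1\setminus\{0\}).
\]
Since the left-hand side is radial in $x$ for every $r$, so is the limit; hence $\phi_n^1$ must be constant on $\mathbb{S}^{N-1}$.

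Next I would invoke the spectral observation that the only eigenfunctions of \eqref{eq:eigen_local} which are constant on $\mathbb{S}^{N-1}$ correspond to $n=1$. Indeed, by \eqref{eq:eigenvalues_local}, $\mu_1^1(\lambda)=-\lambda$ is simple with eigenspace consisting of the constants, while any eigenfunction associated with $n\geq 2$ is $L^2(\mathbb{S}^{N-1})$-orthogonal to constants and therefore cannot itself be a non-zero constant. Thus $n=1$ and $\phi_1^1\equiv\kappa$ for some $\kappa\in\R\setminus\{0\}$, non-zero because $\phi_n^1$ is non-trivial by construction of \Cref{thm:FFT}. Evaluating the displayed convergence at any point with $|x|=1$ delivers the first claimed limit $r^{-\gamma_1^1(\lambda)}u(r)\to\kappa$.

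For the derivative limit, I would set $v_r(x):=r^{-\gamma_1^1(\lambda)}u(rx)$; since $u$ is radial one has $\nabla v_r(x)=r^{1-\gamma_1^1(\lambda)}u'(r|x|)\,x/|x|$, whereas the gradient of the limit function $\kappa|x|^{\gamma_1^1(\lambda)}$ at $|x|=1$ equals $\kappa\gamma_1^1(\lambda)\,x$. The $C^{1,\alpha}_{\loc}$-convergence of $v_r$ then yields $r^{1-\gamma_1^1(\lambda)}u'(r)\to\kappa\gamma_1^1(\lambda)$ upon evaluating at $|x|=1$. The special case $\lambda=0$ is immediate since $\gamma_1^1(0)=0$, whence $u(r)\to\kappa$ as $r\to 0$. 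The only non-routine step in the argument is the spectral observation that constancy selects the first eigenvalue; once \Cref{thm:FFT} is in hand, everything else is a direct computation.
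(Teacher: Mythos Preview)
Your proposal is correct and follows essentially the same argument as the paper: apply \Cref{thm:FFT}, use radiality of $u$ to force the limiting eigenfunction $\phi_n^1$ to be constant, and then invoke $L^2$-orthogonality of eigenfunctions to conclude that only $n=1$ admits constant eigenfunctions. The paper presents this reasoning in the paragraph immediately preceding the corollary; your write-up simply spells out a bit more explicitly the passage from $C^{1,\alpha}_{\loc}$-convergence to the two limits (in particular the derivative limit via the gradient at $|x|=1$), which the paper leaves implicit.
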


\subsection{\texorpdfstring{The case $0<s<1$}{The case 0<s<1}}\label{sec:nonlocal_behvaior}
In the fractional case $0<s<1$, the scenario is qualitatively different from the one in the case $s=1$. 
What we are going to discuss in this section is mostly contained in \cite{Fall2014}. Here, we fix   $0<s<\min\{1,N/2\}$,   $\lambda<\Lambda_{N,s}$, $\V\in C^1(B_1\setminus\{0\})$ satisfying \eqref{eq:G}, $2<p\leq 2^*_s$ and $a\in\R$ and we focus on weak solutions of
\begin{equation}\label{eq:nonlocal}
	(-\Delta)^s u-\frac{\lambda}{|x|^{2s}}u=\V u+a|u|^{p-2}u \quad\text{in }B_1,
\end{equation}
that is, $u\in \Ds$ and 
\begin{equation*}
	(u,\varphi)_{\Ds}-\lambda\int_{B_1}\frac{u\varphi}{|x|^{2s}}\dx=\int_{B_1}(\V+a|u|^{p-2}) u\varphi\dx\quad\text{for all }\varphi\in C_c^\infty(B_1).
\end{equation*}
We  say that $u$ is a \emph{radial}  solution to \eqref{eq:nonlocal} if we also have $u\in\Dsr$.

Analogously to the case $s=1$, it turns out that the local behavior of weak solutions of \eqref{eq:nonlocal} is governed by a suitable spherical eigenvalue problem. However, a striking difference consists in the fact that such eigenvalue problem has a different nature with respect to the case $s=1$, since it acts on the upper $N$-dimensional sphere
\begin{equation*}
	\mathbb{S}^N_+:=\{(\theta',\theta_{N+1})\in\mathbb{S}^N\colon \theta_{N+1}>0\}.
\end{equation*}
In order to formulate it, we first fix some notation. Let 
\begin{equation*}
	H^1(\mathbb{S}_N^+;\theta_{N+1}^{1-2s}):=\left\{v\in L^1_{\loc}(\mathbb{S}_+^N)\colon \norm{v}_{H^1(\mathbb{S}^N_+;\theta_{N+1}^{1-2s})}^2:= \int_{\mathbb{S}_N^+}\theta_{N+1}^{1-2s}(\abs{\nabla_{\mathbb{S}^N} v}^2+v^2)\,\ds(\theta) <\infty\right\}.
\end{equation*}
It is known that there exists a compact trace operator
\[
H^1(\mathbb{S}^N_+;\theta_{N+1}^{1-2s})\hookrightarrow L^2(\partial \mathbb{S}^N_+)
\]
so that the following characterization holds
\[
H^1_0(\mathbb{S}^N_+;\theta_{N+1}^{1-2s})=\left\{\phi\in H^1(\mathbb{S}^N_+;\theta_{N+1}^{1-2s})\colon \phi=0~\text{in }\partial \mathbb{S}^N_+\right\}.
\]
In an analogous way, if
\[
B_1^+:=\{(x,t)\in\R^{N+1}\colon |x|^2+t^2<1,~t>0\}
\]
we denote
\[
H^1(B_1^+;\,t^{1-2s})=\left\{v\in L^1_{\loc}(B_1^+)\colon \norm{v}_{H^1(B_1^+;t^{1-2s})}^2:= \int_{B_1^+}t^{1-2s}(\abs{\nabla v}^2+v^2)\dx \d t<\infty\right\}.
\]
Again, in view of the compact trace embedding
\[
H^1(B_1^+;t^{1-2s})\hookrightarrow L^2(B_1)
\]
we have the characterization
\[
H^1_{0,B_1}(B_1^+;t^{1-2s})=\left\{ v\in H^1(B_1^+;t^{1-2s})\colon v=0~\text{on }B_1 \right\}.
\]
Now, we consider the following eigenvalue problem:
\begin{equation}\label{spherical}
	\begin{bvp}
		-\dive_{\mathbb{S}^N}(\theta_{N+1}^{1-2s}\,\nabla_{\mathbb{S}^N}\phi)&=\mu\theta_{N+1}^{1-2s}\phi, &&\text{in }\mathbb{S}_+^N, \\
		-\lim_{\theta_{N+1}\to 0^+}\theta_{N+1}^{1-2s}\,\nabla_{\mathbb{S}^N}\phi\cdot \mathbf{e}_{N+1}&=\kappa_s\lambda\phi, &&\text{on }\partial\mathbb{S}_+^N,
	\end{bvp}
\end{equation}
where $\mathbf{e}_{N+1}:=(0,\dots,0,1)\in\mathbb{S}^N_+$ and
\begin{equation*}
	\kappa_s:=\frac{\Gamma(1-s)}{2^{2s-1}\Gamma(s)}.
\end{equation*}
More precisely,  \eqref{spherical} must be intended in a weak sense: we say that $\mu\in\R$ is an \emph{eigenvalue} of and $\phi\in H^1(\mathbb{S}_N^+;\theta_{N+1}^{1-2s})\setminus\{0\}$ is a corresponding \emph{eigenfunction} if
\begin{equation*}
	\int_{\mathbb{S}^N_+}\theta_{N+1}^{1-2s}\nabla_{\mathbb{S}^N}\phi\cdot\nabla_{\mathbb{S}^N}\psi\ds(\theta)-\kappa_s\lambda\int_{\partial \mathbb{S}^N_+}\phi\psi\d\theta'=\mu\int_{\mathbb{S}^N_+}\theta_{N+1}^{1-2s}\phi\psi\ds(\theta)
\end{equation*}
for all $\psi\in H^1(\mathbb{S}_N^+;\theta_{N+1}^{1-2s})$. By classical spectral theory, (being $\theta_{N+1}^{1-2s}$ an $A_2$-Muckenhoupt weight, see \cite[Section 2.1]{Fall2014}) it is known that there exists a discrete sequence of eigenvalues
\[
-\left(\frac{N-2s}{2}\right)^2<\mu_1^s(\lambda)< \mu_2^s(\lambda)\leq \cdots\leq \mu_k^s(\lambda)\leq \cdots
\]
diverging to $+\infty$. Moreover, we have the following characterization
\begin{equation}\label{eq:mu_1}
	\mu_1^s(\lambda)=\min\left\{ \frac{\displaystyle \int_{\mathbb{S}^N_+}\theta_{N+1}^{1-2s}|\nabla_{\mathbb{S}^N}\phi|^2\ds(\theta)-\kappa_s\lambda\int_{\partial \mathbb{S}^N_+}\phi^2\d\theta' }{\displaystyle \int_{\mathbb{S}^N_+}\theta_{N+1}^{1-2s}\phi^2\ds(\theta)}\colon\phi\in H^1(\mathbb{S}^N_+;\theta_{N+1}^{1-2s})\setminus\{0\} \right\}.
\end{equation}
In \cite{Fall2014}, the authors prove that the local asymptotic behavior of solutions of \eqref{eq:nonlocal} is governed by eigenvalues and eigenfunctions of \eqref{spherical}. More precisely, we have the following.
\begin{theorem}[\cite{Fall2014}, Theorem 1.1]\label{thm:FF}
	Let $\lambda<\Lambda_{N,s}$, $\V\in C^1(B_1\setminus\{0\})$ satisfying \eqref{eq:G}, $2<p\leq 2^*_s$ and $a\in\R$ and let $u\in\Ds$ be a non-trivial weak solution of \eqref{eq:nonlocal}. Then $u\in C^{1,\alpha}_\loc(B_1\setminus\{0\})$ (for some $\alpha\in(0,1)$) and there exists $n\in\N$ and an eigenfunction $\phi_n^s\in H^1(\mathbb{S}^N_+;\theta_{N+1}^{1-2s})\setminus\{0\}$ of \eqref{spherical} such that
	\begin{equation*}
		r^{-\gamma_n^s(\lambda)}u(rx)\to |x|^{\gamma_n^s(\lambda)}\phi_n^s\left(\frac{x}{|x|},0\right)\quad\text{as }r\to 0,~\text{in }C^{1,\alpha}_\loc(B_1\setminus\{0\}),
	\end{equation*}
	where
	\begin{equation}\label{eq:gamma_n_s}
		\gamma_n^s(\lambda):=-\frac{N-2s}{2}+\sqrt{\left(\frac{N-2s}{2}\right)^2+\mu_n^s(\lambda)}
	\end{equation}
	and $\mu_n^s(\lambda)$ is the eigenvalue of \eqref{spherical} corresponding to $\phi_n^s$. Moreover, if $\lambda=0$, then $u\in C^{1,\alpha}_\loc(B_1)$, for some $\alpha\in(0,1)$.
\end{theorem}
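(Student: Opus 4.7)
The plan is to reduce the nonlocal equation \eqref{eq:nonlocal} to a degenerate local problem on $\R^{N+1}_+$ via the Caffarelli--Silvestre extension, and then run an Almgren-type blow-up analysis governed by the spherical eigenvalue problem \eqref{spherical}. Concretely, let $U(x,t)$ denote the $s$-harmonic extension of $u$, so that equation \eqref{eq:nonlocal} becomes the mixed boundary value problem
\begin{equation*}
-\dive(t^{1-2s}\nabla U)=0\text{ in }B_1^+,\qquad -\lim_{t\to 0^+}t^{1-2s}\partial_t U=\kappa_s\left(\tfrac{\lambda}{|x|^{2s}}+\V+a|u|^{p-2}\right)u\text{ on }B_1.
\end{equation*}
Since $t^{1-2s}$ is an $A_2$-Muckenhoupt weight, De Giorgi--Nash--Moser and Schauder theory for weighted degenerate elliptic equations, applied away from the singular point $0$, produces $U\in C^{1,\alpha}_\loc(\overline{\R^{N+1}_+}\setminus\{0\})$; passing to the trace yields $u\in C^{1,\alpha}_\loc(B_1\setminus\{0\})$. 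Here the subcriticality $p\le 2^*_s$ and \eqref{eq:G} make all lower-order terms integrable with room to spare.

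To study the behavior at $0$, I would introduce the Almgren--Garofalo frequency
\begin{equation*}
\mathcal{N}(r):=\frac{rD(r)}{H(r)},\ H(r):=\int_{\partial B_r^+}t^{1-2s}U^2\dsi,\ D(r):=\int_{B_r^+}t^{1-2s}|\nabla U|^2-\kappa_s\int_{B_r}\left(\tfrac{\lambda}{|x|^{2s}}+\V+a|u|^{p-2}\right)u^2\dx,
\end{equation*}
and, via Pohozaev-type identities obtained by testing the extended equation against $(x,t)\cdot\nabla U$, derive a differential inequality of the form $(\log\mathcal{N})'(r)\ge -Cr^{\eps-1}$ for small $r$. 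The strict inequality $\lambda<\Lambda_{N,s}$ coupled with the fractional Hardy inequality \eqref{eq:hardy} ensures that $H$ and $D$ are finite and that $\mathcal{N}$ is bounded below by $-\bigl((N-2s)/2\bigr)^{2}$; \eqref{eq:G} absorbs the contribution of $\V$, while subcriticality together with trace embeddings handles $a|u|^{p-2}u$. Integrating, $\mathcal{N}_0:=\lim_{r\to 0^+}\mathcal{N}(r)$ exists and is finite, and a standard doubling estimate yields $H(r)\asymp r^{N+1-2s+2\mathcal{N}_0}$.

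The identification of $\mathcal{N}_0$ proceeds by blow-up: rescale $U_r(z):=U(rz)/\sqrt{H(r)/r^{N+1-2s}}$; the frequency and doubling estimates bound $(U_r)_{r>0}$ uniformly in $H^1(B_1^+;t^{1-2s})$, and the regularity theory upgrades weak convergence along a subsequence to $C^{1,\alpha}$-convergence on compact subsets of $\overline{\R^{N+1}_+}\setminus\{0\}$. In the limit, $\V$ and $a|u|^{p-2}$ vanish thanks to \eqref{eq:G} and subcriticality, so the limit is a nontrivial $\mathcal{N}_0$-homogeneous solution $|z|^{\mathcal{N}_0}\Phi(z/|z|)$ of the unperturbed extended problem; separation of variables identifies $\Phi|_{\mathbb{S}^N_+}$ with an eigenfunction $\phi_n^s$ of \eqref{spherical} and forces $\mu_n^s(\lambda)=\mathcal{N}_0(\mathcal{N}_0+N-2s)$, i.e. $\mathcal{N}_0=\gamma_n^s(\lambda)$ as in \eqref{eq:gamma_n_s}. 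Independence of the limit from the chosen subsequence follows from a further ODE argument on $\mathcal{N}$ in the spirit of Fall--Felli, and the trace of the limit produces the stated asymptotic expansion of $u$. Finally, when $\lambda=0$ the Hardy singularity is absent and the Neumann datum $(\V+a|u|^{p-2})u$ is locally in $L^q$ for arbitrarily large $q$ (by \eqref{eq:G} and subcriticality); weighted De Giorgi--Nash--Moser plus Schauder theory then deliver $u\in C^{1,\alpha}_\loc(B_1)$. The hard part is the Almgren monotonicity: simultaneously controlling the Hardy, subcritical and perturbative terms to get a single master inequality on $\mathcal{N}(r)$ is exactly what the sharp constant $\Lambda_{N,s}$, the exponent $p<2^*_s$, and the gain $\eps>0$ in \eqref{eq:G} are there to permit.
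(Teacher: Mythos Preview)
The paper does not give its own proof of this statement: \Cref{thm:FF} is quoted verbatim from \cite[Theorem 1.1]{Fall2014} and is used as a black box. There is therefore no in-paper argument to compare your proposal against. Your sketch---Caffarelli--Silvestre extension, Almgren-type frequency with a perturbative monotonicity estimate, and a blow-up to a homogeneous limit classified via the spherical problem \eqref{spherical}---is indeed the strategy of the original Fall--Felli paper, so you have correctly identified the method behind the cited result, but as far as the present manuscript is concerned the theorem is simply imported.
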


As a corollary, we are able to derive the asymptotic local behavior near the origin of radial solutions of \eqref{eq:nonlocal}. Before stating it, we need the following boundary unique continuation result for eigenfunctions.

\begin{lemma}\label{lemma:UC}
	Let $\mu\in\R$ be an eigenvalue of \eqref{spherical} and let $\phi\in H^1(\mathbb{S}^N_+;\theta_{N+1}^{1-2s})\setminus\{0\}$ be a corresponding eigenfunction. Then $\phi\not\equiv 0$ in $\partial \mathbb{S}^N_+$.
\end{lemma}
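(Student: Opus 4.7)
The plan is to argue by contradiction: assume $\phi\equiv 0$ on $\partial\mathbb{S}^N_+$ and derive $\phi\equiv 0$ on all of $\mathbb{S}^N_+$, contradicting the hypothesis. The first observation is that, under this assumption, the Robin-type boundary condition in \eqref{spherical} reduces to $-\lim_{\theta_{N+1}\to 0^+}\theta_{N+1}^{1-2s}\nabla_{\mathbb{S}^N}\phi\cdot\mathbf{e}_{N+1}=\kappa_s\lambda\phi\equiv 0$ on $\partial\mathbb{S}^N_+$, so that both the Dirichlet trace and the weighted conormal trace of $\phi$ vanish on the equator.

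The key step is to extend $\phi$ by zero across the equator to the lower hemisphere, obtaining $\tilde\phi\in H^1(\mathbb{S}^N;|\theta_{N+1}|^{1-2s})$. Testing the weak formulation of \eqref{spherical} against the restriction to $\mathbb{S}^N_+$ of an arbitrary $\psi\in C^\infty(\mathbb{S}^N)$, and exploiting the vanishing of $\phi$ on $\partial\mathbb{S}^N_+$ (which annihilates the $\kappa_s\lambda$-boundary integral), one verifies that $\tilde\phi$ is a weak solution on the full sphere of the degenerate elliptic equation
\begin{equation*}
-\dive_{\mathbb{S}^N}\bigl(|\theta_{N+1}|^{1-2s}\nabla_{\mathbb{S}^N}\tilde\phi\bigr)=\mu\,|\theta_{N+1}|^{1-2s}\tilde\phi\quad\text{on }\mathbb{S}^N.
\end{equation*}

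Since $\tilde\phi$ vanishes identically on the nonempty open set $\{\theta\in\mathbb{S}^N\colon\theta_{N+1}<0\}$ and $|\theta_{N+1}|^{1-2s}$ is an $A_2$-Muckenhoupt weight, strong unique continuation for this class of degenerate elliptic equations (cf.\ \cite{Fall2014} and the Almgren-type monotonicity formulas developed therein) forces $\tilde\phi\equiv 0$ on $\mathbb{S}^N$, contradicting $\phi\not\equiv 0$. The main technical obstacle is justifying the propagation of the vanishing across the degenerate hyperplane $\{\theta_{N+1}=0\}$; as an equivalent route, one can lift to $\mathbb{R}^{N+1}_+$ via the homogeneous ansatz $U(X):=|X|^\gamma\phi(X/|X|)$ with $\gamma(\gamma+N-2s)=\mu$, which satisfies $\dive(t^{1-2s}\nabla U)=0$ with both zero trace and zero weighted conormal derivative on $\mathbb{R}^N\setminus\{0\}$, and then directly invoke the boundary unique continuation results established in \cite{Fall2014}.
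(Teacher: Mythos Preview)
Your argument is correct and your alternative route via the homogeneous lift $U(X)=|X|^\gamma\phi(X/|X|)$ is precisely the path the paper takes. The only substantive difference is the very last step: rather than invoking a ``boundary unique continuation'' statement from \cite{Fall2014}, the paper makes the zero-extension trick explicit. That is, it restricts $U$ to the upper half of the unit ball $B_1^+$, observes that $U=0$ on $B_1\subset\partial B_1^+$ (since $\phi$ vanishes on the equator), extends by zero to obtain $\tilde U\in H^1(B_1^{N+1};|t|^{1-2s})$ solving $\dive(|t|^{1-2s}\nabla\tilde U)=0$ in the full ball, and then applies the \emph{interior} strong unique continuation property for $A_2$-weighted operators (citing \cite{TZ}) to conclude $\tilde U\equiv 0$.

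Your primary route—extending $\phi$ by zero directly on the sphere $\mathbb{S}^N$ and applying unique continuation there—is a legitimate shortcut that avoids the homogeneous lift altogether. It buys conceptual economy: there is no need to choose $\gamma$ or verify that $U$ lies in the right weighted space on $B_1^+$. The paper's Euclidean detour, on the other hand, lands you squarely in the setting where $A_2$-weighted unique continuation results are stated off the shelf, so no localization in spherical charts is needed. Either way the key analytic input is the same: strong UCP for $\dive(|t|^{1-2s}\nabla\cdot)$-type operators.
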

\begin{proof}
	Let us assume by contradiction that $\phi\equiv 0$ on $\partial \mathbb{S}^N_+$. We let 
	\[
	u(r,\theta):=r^\gamma\phi(\theta),
	\]
	where
	\[
	\gamma:=-\frac{N-2s}{2}+\sqrt{\left(\frac{N-2s}{2}\right)^2+\mu}
	\]
	is such that
	\[
	\gamma(\gamma+N-2s)=\mu.
	\]
	One can easily check that $u\in H^1(B_1^+;t^{1-2s})$ and that, since $\phi=0$ on $\partial\mathbb{S}_+^N$, then $u=0$ on $B_1$. Moreover, by the choice of $\gamma$ and direct computations in polar coordinates, one can easily prove that
	\begin{equation}\label{eq:1}
		\int_{B_1^+}t^{1-2s}\nabla u\cdot\nabla\phi\dxdt=0\quad\text{for all }\phi\in C_c^\infty(B_1^+\cup B_1).
	\end{equation}
	Let us now set, for $(x,t)\in B_1^{N+1}:=\{z\in\R^{N+1}\colon |z|<1\}$
	\[
	\tilde{u}(x,t):=\begin{cases}
		u(x,t)&\text{if }t>0, \\
		0&\text{if }t\leq 0.
	\end{cases}
	\]
	One can easily see that, since $u=0$ on $B_1$, then $\tilde{u}\in H^1(B_1^{N+1};\abs{t}^{1-2s})$. Moreover, for $\varphi\in C_c^\infty(B_1^{N+1})$, in view of \eqref{eq:1} we know that
	\[
	\int_{B_1^{N+1}}\abs{t}^{1-2s}\nabla \tilde u\cdot\nabla \varphi\dx \d t=\int_{B_1^+}t^{1-2s}\nabla u\cdot\nabla \varphi\dx \d t=0,
	\]
	that is
	\[
	\dive(\abs{t}^{1-2s}\nabla \tilde{u})=0\quad\text{in }B_1^{N+1}.
	\]
	From the strong unique continuation property for elliptic equations with $A_2$-Muckenhoupt weights (such as $\abs{t}^{1-2s}$), see e.g. \cite[Corollary 1.4]{TZ}, we know that, since $\tilde{u}$ vanishes on an open set, then it must be identically $0$ over the whole $B_1^{N+1}$, thus meaning that $\phi\equiv 0$ on $\mathbb{S}^N_+$, which is a contradiction. This concludes the proof.
\end{proof}

We are now ready to provide the local behavior of radial weak solutions of \eqref{eq:nonlocal} near the origin. Essentially, this comes from the fact that eigenfunctions of \eqref{spherical} depending only on $\theta_{N+1}$ are non-zero constants on $\partial\mathbb{S}^N_+$.

\begin{corollary}\label{cor:nonlocal_radial}
	Let $\lambda<\Lambda_{N,s}$, $\V\in C^1(B_1\setminus\{0\})$ satisfying \eqref{eq:G}, $2<p\leq 2^*_s$ and $a\in\R$ and let $u\in\Dsr$ be a non-trivial, radial weak solution of \eqref{eq:nonlocal}. Then there exists $\kappa\in\R\setminus\{0\}$ and $n\in\N$ such that, if $\gamma_n^s(\lambda)$ is as in \eqref{eq:gamma_n_s}, the following holds
	\begin{equation*}
		\lim_{r\to 0}r^{-\gamma_n^s(\lambda)}u(r)=\kappa\quad\text{and}\quad \lim_{r\to 0}r^{-\gamma_n^s(\lambda)+1}u'(r)=\kappa\gamma_n^s(\lambda).
	\end{equation*}
\end{corollary}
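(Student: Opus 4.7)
The plan is to apply \Cref{thm:FF} to the radial solution $u$ and then leverage the radiality together with the boundary unique continuation from \Cref{lemma:UC} to pin down the angular profile of the blow-up limit.

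Concretely, \Cref{thm:FF} produces an index $n\in\N$ and an eigenfunction $\phi_n^s\in H^1(\mathbb{S}^N_+;\theta_{N+1}^{1-2s})\setminus\{0\}$ of \eqref{spherical} such that
\begin{equation*}
u_r(x):=r^{-\gamma_n^s(\lambda)}u(rx)\longrightarrow |x|^{\gamma_n^s(\lambda)}\phi_n^s\!\left(\tfrac{x}{|x|},0\right)\quad\text{in }C^{1,\alpha}_\loc(B_1\setminus\{0\})
\end{equation*}
as $r\to 0^+$. Since $u\in\Dsr$, each $u_r$ is radial in $x$, and radiality is preserved under locally uniform convergence; consequently the trace $\theta'\mapsto \phi_n^s(\theta',0)$ must be constant on $\mathbb{S}^{N-1}\simeq\partial\mathbb{S}^N_+$. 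Denoting this constant by $\kappa\in\R$, the decisive non-degeneracy step is to exclude $\kappa=0$: were $\phi_n^s$ identically zero on $\partial\mathbb{S}^N_+$, \Cref{lemma:UC} would force $\phi_n^s\equiv 0$ on the whole of $\mathbb{S}^N_+$, contradicting its nontriviality as an eigenfunction.

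With $\kappa\neq 0$ in hand, both asserted limits follow by testing the $C^{1,\alpha}$-convergence at points of $\partial B_1\subset B_1\setminus\{0\}$. Writing the radial profile as $u(x)=U(|x|)$ and setting $\rho=|x|$, evaluation at $\rho=1$ immediately yields $r^{-\gamma_n^s(\lambda)}U(r)\to\kappa$, which is the first claim. Differentiating $u_r$ in the radial direction gives $\partial_\rho u_r|_{\rho=1}=r^{1-\gamma_n^s(\lambda)}U'(r)$, and the $C^1$-part of the convergence then forces this quantity to tend to $\partial_\rho\bigl(\kappa\rho^{\gamma_n^s(\lambda)}\bigr)|_{\rho=1}=\kappa\gamma_n^s(\lambda)$, which is the second. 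The only delicate point is the non-vanishing of $\kappa$, and this is precisely what \Cref{lemma:UC} is tailored to supply; beyond that the proof reduces to reading off \Cref{thm:FF} along a single radial ray.
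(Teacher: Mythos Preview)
Your proposal is correct and follows essentially the same route as the paper: apply \Cref{thm:FF}, use radiality to conclude that the boundary trace of $\phi_n^s$ is a constant $\kappa$, and invoke \Cref{lemma:UC} to ensure $\kappa\neq 0$. The paper's proof is terser and leaves the extraction of the two limits implicit in the $C^{1,\alpha}_\loc$-convergence, whereas you spell out how evaluating at $\rho=1$ recovers both the value and the radial derivative; this is a welcome clarification but not a different idea.
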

\begin{proof}
	From \Cref{thm:FF} we know that there exists $n\in\N$ and an eigenfunction $\phi_n^s\in H^1(\mathbb{S}^N_+;\theta_{N+1}^{1-2s})\setminus\{0\}$ of \eqref{spherical} such that
	\begin{equation*}
		r^{-\gamma_n^s(\lambda)}u(rx)\to |x|^{\gamma_n^s(\lambda)}\phi_n^s\left(\frac{x}{|x|},0\right)\quad\text{as }r\to 0,~\text{in }C^{1,\alpha}_\loc(B_1\setminus\{0\}),
	\end{equation*}
	where $\gamma_n^s(\lambda)$ is as in \eqref{eq:gamma_n_s}. Now, since $u$ is radial, then $\phi_n^s(\theta',0)= \kappa$ for some $\kappa\neq 0$ and for all $\theta'\in\partial\mathbb{S}^N_+$, and this concludes the proof.	
\end{proof}
We conclude the section with a characterization of $\gamma_\lambda$ from \eqref{eq:gamma_lamda},   i.e., of $\gamma_1^s(\lambda)$.  
\begin{lemma}\label{lemma:Psi_gamma}
	For any $\gamma\in(-2s,N-2s)$, we let
	\begin{equation*}
		\Psi_{N,s}(\gamma):=2^{2s}\frac{\Gamma(\tfrac{N-\gamma}{2})\,\Gamma(\tfrac{2s+\gamma}{2})}{\Gamma(\frac{N-2s-\gamma}{2})\,\Gamma(\frac\gamma2)}
	\end{equation*}
	Then
	\begin{equation*}
		\Psi_{N,s}\colon \left(-2s,\frac{N-2s}{2}\right]\to \left(-\infty,\Lambda_{N,s}\right]
	\end{equation*}
	is a strictly increasing bijection and
	\begin{equation}\label{eq:Psi_gamma_th1}
		\gamma_\lambda=-\Psi_{N,s}^{-1}(\lambda)\quad\text{for all }\lambda\in\left(-\infty,\Lambda_{N,s}\right]
	\end{equation}
	where $\gamma_\lambda$ is as in \eqref{eq:gamma_lamda}.
\end{lemma}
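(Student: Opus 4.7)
The lemma naturally splits into an analytic part describing $\Psi_{N,s}$ as a special function, and a spectral part identifying $-\Psi_{N,s}^{-1}(\lambda)$ with $\gamma_\lambda$. For Part A, I would first compute $\Psi_{N,s}((N-2s)/2)=\Lambda_{N,s}$ directly (the two numerator Gammas and the two denominator Gammas pairwise coincide at this value). Next I analyze the limit $\gamma\to -2s^+$: the factor $\Gamma((2s+\gamma)/2)$ exhibits a simple pole while the other Gammas stay finite, and since $\Gamma(\gamma/2)\to\Gamma(-s)<0$ (as $-s\in(-1,0)$), the full product $\Psi_{N,s}(\gamma)\to-\infty$; additionally $\Psi_{N,s}(0)=0$ via the pole of $\Gamma(\gamma/2)$ at the origin. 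Strict monotonicity on $(-2s,(N-2s)/2]$ follows by logarithmic differentiation and the digamma integral representation $\psi(b)-\psi(a)=\int_0^1(t^{a-1}-t^{b-1})/(1-t)\dt$ (valid for $b>a>0$). Pairing the four digamma differences on $(0,(N-2s)/2)$ yields
\[
\frac{2\,\Psi_{N,s}'(\gamma)}{\Psi_{N,s}(\gamma)}=\int_0^1\frac{(1-t^s)\bigl(t^{\gamma/2-1}-t^{(N-2s-\gamma)/2-1}\bigr)}{1-t}\dt,
\]
whose integrand is positive since $\gamma/2-1<(N-2s-\gamma)/2-1$ and $t\in(0,1)$. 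On $(-2s,0)$, where $\Psi_{N,s}<0$, I would first extract the sign via $\Gamma(\gamma/2)=2\Gamma(1+\gamma/2)/\gamma$, then repeat the digamma computation for $\log|\Psi_{N,s}|$. Combined with continuity this gives the strictly increasing bijection $\Psi_{N,s}:(-2s,(N-2s)/2]\to(-\infty,\Lambda_{N,s}]$.

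For Part B, the key tool is the ground-state identity
\[
(-\Delta)^s(|x|^{-\gamma})=\Psi_{N,s}(\gamma)\,|x|^{-\gamma-2s}\qquad\text{for }\gamma\in(-2s,N-2s),
\]
established in \cite{Frank2008} (and revisited in \cite{Dipierro}). Given $\lambda\in(-\infty,\Lambda_{N,s}]$, set $\gamma_\star:=\Psi_{N,s}^{-1}(\lambda)\in(-2s,(N-2s)/2]$; then $u_\star(x):=|x|^{-\gamma_\star}$ is a positive radial distributional solution of $(-\Delta)^s u-\lambda|x|^{-2s}u=0$ on $\R^N\setminus\{0\}$. I would pass to its Caffarelli--Silvestre extension $\tilde u_\star$ on $\R^{N+1}_+$, which inherits positivity, radial symmetry in $x$, and homogeneity of degree $-\gamma_\star$, hence separates as $\tilde u_\star(x,t)=r^{-\gamma_\star}\phi(\theta_{N+1})$ with $r=\sqrt{|x|^2+t^2}$ and $\theta_{N+1}=t/r$. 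Plugging this ansatz into $\dive(t^{1-2s}\nabla\tilde u_\star)=0$ together with the Neumann-type boundary condition encoding the coefficient $\lambda$ reduces, by Euler's homogeneity relation, to the axially symmetric case of \eqref{spherical}, with $\phi$ an eigenfunction corresponding to $\mu=-\gamma_\star(-\gamma_\star+N-2s)=\gamma_\star^2-(N-2s)\gamma_\star$. Since $\phi>0$ on $[0,1]$, the Perron--Frobenius-type principle for the self-adjoint weighted problem \eqref{spherical} forces $\mu=\mu_1^s(\lambda)$. Substituting into \eqref{eq:gamma_lamda} and using $\bigl(\tfrac{N-2s}{2}\bigr)^2+\mu_1^s(\lambda)=\bigl(\gamma_\star-\tfrac{N-2s}{2}\bigr)^2$ together with $\gamma_\star\leq (N-2s)/2$ to fix the sign of the square root, we conclude $\gamma_\lambda=-\gamma_\star=-\Psi_{N,s}^{-1}(\lambda)$, which is \eqref{eq:Psi_gamma_th1}.

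The main obstacle I expect is in Part B: carefully verifying, via the Caffarelli--Silvestre extension with the correct $\kappa_s$ normalization, that the separated ansatz $r^{-\gamma_\star}\phi(\theta_{N+1})$ realizes precisely the boundary condition of \eqref{spherical} with coefficient $\kappa_s\lambda$, and then invoking Perron--Frobenius to identify $\mu=\mu_1^s(\lambda)$ (rather than some higher radial eigenvalue). Once this identification is in hand, the final algebra collapses immediately. A secondary technical point in Part A is the crossing of $\gamma=0$ in the monotonicity argument, due to the pole of $\Gamma(\gamma/2)$ and the associated sign flip, which requires the auxiliary rewriting in terms of $\Gamma(1+\gamma/2)$ to keep the digamma identities usable.
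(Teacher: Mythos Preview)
Your plan is correct and shares the paper's core strategy: exploit the ground-state identity $(-\Delta)^s|x|^{-\gamma}=\Psi_{N,s}(\gamma)|x|^{-\gamma-2s}$, pass to the Caffarelli--Silvestre extension, and use positivity to force the spherical eigenvalue to be the first one. The execution differs in two places. For Part A the paper simply cites \cite[Lemma~2.3]{ChenWeth} for the strictly increasing bijection, while you provide a self-contained digamma argument; yours is standard and works. For Part B the paper does not separate variables explicitly: it defines the extension $V_\lambda$ via the Poisson kernel, checks $V_\lambda\in H^1(B_1^+;t^{1-2s})$, and then applies the asymptotic expansion \cite[Theorem~4.1]{Fall2014} to $V_\lambda$; since the trace $V_\lambda(x,0)=|x|^{-\gamma_\star}$ is already exactly homogeneous and positive, the expansion forces $\gamma_n^s(\lambda)=-\gamma_\star$ and $n=1$. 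Your route---noting that the extension inherits homogeneity (by scaling and uniqueness of the Poisson extension), hence factors as $r^{-\gamma_\star}\phi(\theta_{N+1})$ with $\phi>0$ an eigenfunction of \eqref{spherical}, then invoking simplicity of the ground state---is more elementary and bypasses the asymptotic machinery entirely. The obstacle you anticipate (matching the boundary coefficient $\kappa_s\lambda$ in \eqref{spherical}) is real but routine once the normalization of the extension is fixed; the paper avoids it by treating \cite{Fall2014} as a black box. Either approach is fine; yours is more self-contained, the paper's is shorter given the results already quoted.
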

\begin{proof}
	The fact that $\Psi_{N,s}$ is a strictly increasing bijection is proved in \cite[Lemma 2.3]{ChenWeth}, so we pass to the proof of \eqref{eq:Psi_gamma_th1}. Let $-2s<\gamma<(N-2s)/2$ and let $u_\gamma(x):=|x|^{-\gamma}$. In view of \cite[Lemma 2.1]{ChenWeth}, we know that $u_\gamma\in L^2(\R^N;(1+|x|^{N+2s})^{-1}\dx)$, which implies $(-\Delta)^su_\gamma\in \mathcal{S}'(\R^N)$, and that 
	\begin{equation*}
		(-\Delta)^s u_\gamma=\Psi_{N,s}(\gamma)|x|^{-2s}u_\gamma\quad\text{in }\mathcal{S}'(\R^N).
	\end{equation*}
	Moreover, it is a standard procedure to extend the function $u_\gamma\colon\R^N\to\R$ to a function
	\begin{equation*}
		U_\gamma\colon\R^{N+1}_+\to\R,
	\end{equation*}
	such that $U_\gamma(x,0)=u_\gamma(x)$ for all $x\neq 0$ and satisfying
	\begin{equation}\label{eq:gamma_ext}
		\begin{bvp}
			-\dive(t^{1-2s}\nabla U_\gamma)&=0,&&\text{in }\R^{N+1}_+, \\
			-\lim_{t\to 0^+}t^{1-2s}\frac{\partial U_\gamma}{\partial t}&=\kappa_s(-\Delta)^s u_\gamma, &&\text{on }\R^N,
		\end{bvp}
	\end{equation}
	in the sense that
	\begin{equation*}
		-\int_{\R^{N+1}_+}u\dive(t^{1-2s}\nabla \varphi)\dxdt=\kappa_s\int_{\R^N}u(-\Delta)^s\varphi\dx\quad\text{for all }\varphi\in \mathcal{S}(\overline{\R^{N+1}_+}).
	\end{equation*}
	This procedure is done via convolution with the Poisson kernel
	\[
		P_t(x):=p_{N,s}\frac{t^{2s}}{\left(|x|^2+t^2\right)^{\frac{N+2s}{2}}},\quad\text{with }p_{N,s}:=\pi^{-\frac{N}{2}}\frac{\Gamma\left(\frac{N+2s}{2}\right)}{\Gamma(s)}
	\]
	see \cite{CaffSilv} and \cite{CS}, i.e.
	\begin{equation*}
		U_\gamma(x,t):=(P_t\ast u_\gamma)(x)\quad\text{for }x\in\R^N,~t>0.
	\end{equation*}
	Since $\gamma<(N-2s)/2<N-2s$, one can easily see that 
	\begin{equation*}
		|(-\Delta)^{s/2}u_\gamma|\in L^2_\loc(\R^N), 
	\end{equation*}
	which readily implies that $U_\gamma\in H^1(B_1^+;t^{1-2s})$.	As a consequence, we have that \eqref{eq:gamma_ext} holds in a weak sense in $B_1^+$, that is
	\begin{equation*}
		\int_{B_1^+}t^{1-2s}\nabla U_\gamma\cdot\nabla\varphi\dxdt=\kappa_s\Psi_{N,s}(\gamma)\int_{B_1}\frac{U_\gamma\varphi}{|x|^{2s}}\dx\quad\text{for all }\varphi\in C^\infty_c(B_1^+\cup B_1).
	\end{equation*}
	We now let $\gamma:=\Psi_{N,s}^{-1}(\lambda)$, for $\lambda <\Lambda_{N,s}$, and we consider $V_\lambda:=U_{\Psi_{N,s}^{-1}(\lambda)}$, which weakly solves
	\begin{equation*}
		\begin{bvp}
			-\dive(t^{1-2s}\nabla V_\lambda)&=0, &&\text{in }B_1^+, \\
			-t^{1-2s}\frac{\partial V_\lambda}{\partial t}-\kappa_s\lambda\frac{V_\lambda}{|x|^{2s}}&=0, &&\text{on }B_1.
		\end{bvp}
	\end{equation*}
	Now, in view of \cite[Theorem 4.1]{Fall2014}, we know that there exists an eigenvalue $\mu_n^s(\lambda)$ of \eqref{spherical} and a corresponding eigenfunction $\phi_n^s$ such that
	\begin{equation*}
		r^{-\gamma_n^s(\lambda)}V_\lambda(rx,0)\to |x|^{\gamma_n^s(\lambda)}\phi_n^s\left(\frac{x}{|x|},0\right)\quad\text{as }r\to 0,~\text{in }C^{1,\alpha}_\loc(B_1\setminus\{0\}),
	\end{equation*}
	up to restricting $\alpha\in(0,1)$, where $\gamma_n^s(\lambda)$ is as in \eqref{eq:gamma_n_s}. But since $V_\lambda(x,0)=|x|^{-\Psi_{N,s}^{-1}(\lambda)}>0$ in $B_1\setminus\{0\}$, this forces $n=1$ and $\gamma_\lambda=-\Psi_{N,s}^{-1}(\lambda)$. By passing to the limit, the same holds for the extremal value  $\lambda=\Lambda_{N,s}$, thus concluding the proof.
\end{proof}

\subsection{Remarks and examples}

In the previous sections, we recalled what is the local behavior, near the origin, of solutions of Schr\"odinger equations with Hardy potential: it turns out that, both in the fractional \eqref{eq:nonlocal} and non-fractional \eqref{eq:local} case, such behavior can be expressed in terms of eigenvalues and eigenfunctions of a suitable corresponding eigenvalue problem, i.e. \eqref{spherical} and \eqref{eq:eigen_local} respectively. In particular, the eigenvalue is related with the vanishing order of the solution (through formulas \eqref{eq:gamma_n_s} and \eqref{eq:gamma_n_1}), while a corresponding eigenfuction determines the {shape} of the solution near the origin (see \Cref{thm:FF} and \Cref{thm:FFT}). We also emphasized  that, while in the local case the spherical eigenvalue problem lives in $\mathbb{S}^{N-1}$, in the fractional case the corresponding eigenvalue problem lives in the upper half-sphere in one more dimension, i.e. $\mathbb{S}^N_+$. This comes as a consequence of the fact that the non-local operator $(-\Delta)^s$ in $\R^N$ can be seen as a local boundary Dirichlet-to-Neumann operator in $\R^{N+1}_+$,   as shown by Caffarelli and Silvestre \cite{CaffSilv}.  

When dealing with radial solutions, a remarkable difference emerges. Indeed, while in the local case the eigenfunction determining the shape (near the origin) of a non-trivial radial solution is forced to be the first one, which is a non-zero constant, this does not happen in the fractional case. In particular, when $0<s<1$, the behavior near the origin of a non-trivial radial solution is shaped by an eigenfunction on $\mathbb{S}^N_+$ which only needs to be a non-zero constant at the boundary $\partial\mathbb{S}^N_+$, without necessarily being the first one, i.e. constant on the whole $\mathbb{S}^N_+$. In particular, when $\lambda=0$ this reflects in the possible existence of non-trivial radial solutions which vanish at the origin. 

In order to better clarify this point, let us focus on a particular example. Set $N=2$, $s=1/2$, so that $2s<N<6s$ and $\lambda=0$. In view of \Cref{thm:FF} and \Cref{cor:nonlocal_radial}, if we consider a non-trivial, radial $u\in\mathcal{D}^{\frac{1}{2},2}_\rad(\R^2)$ weakly solving 
\begin{equation*}
	(-\Delta)^{\frac{1}{2}}u=\V u\quad\text{in }B_1
\end{equation*}
with $\V\in C^1(B_1\setminus\{0\})$ satisfying \eqref{eq:G}, then the local behavior of $u$ near the origin is governed by the eigenelements of the following eigenvalue problem
\begin{equation}\label{eq:spherical_harm}
	\begin{bvp}
		-\Delta\phi&=\mu\phi,&&\text{in }\mathbb{S}^2_+, \\
		\frac{\partial\phi}{\partial \theta_3}&=0,&&\text{on }\partial\mathbb{S}^2_+,
	\end{bvp}
\end{equation}
which is \eqref{spherical} with $s=1/2$. By an even reflection, this is equivalent to the eigenvalue problem for the spherical Laplacian in $\mathbb{S}^2$ with even eigenfunctions (hence, non-identically vanishing on $\partial\mathbb{S}^2_+$, see \Cref{lemma:UC}). In particular, such eigenfunctions form a subset of the so called \emph{spherical harmonics} which is nontrivial since, for instance, the restriction of 
\begin{equation}\label{eq:P}
	P(x_1,x_2,x_3):=35x_3^2-30x_3^2(x_1^2+x_2^2+x_3^2)+3(x_1^2+x_2^2+x_3^2)^2
\end{equation}
to $\mathbb{S}^2_+$ belongs to it, with corresponding eigenvalue $\mu=20$.

From the analysis we made in \Cref{sec:nonlocal_behvaior}, we know that there exists an eigenvalue $\mu_n\geq 0$ of \eqref{eq:spherical_harm} and a corresponding eigenfunction $\phi_n$ such that
$\phi_n (\theta',0)=\kappa\neq 0$ for all $\theta'\in\partial\mathbb{S}^2_+=\mathbb{S}^1$ and
\begin{equation*}
	r^{-\gamma_n} u(rx)\to |x|^{\gamma_n}\phi_n\left(\frac{x}{|x|},0\right)=\kappa |x|^{\gamma_n}\quad\text{as }r\to 0,~\text{in }C^{1,\alpha}_\loc(B_1),
\end{equation*}
where
\begin{equation*}
	\gamma_n:=-\frac{1}{2}+\sqrt{\frac{1}{4}+\mu_n}.
\end{equation*}
Besides the case $n=1$, for which $\mu_n^{\frac{1}{2}}(0)=\mu_1^{\frac{1}{2}}(0)=0$ and $\gamma_n^{\frac{1}{2}}(0)=\gamma_1^{\frac{1}{2}}(0)=0$, so that the solution $u$ does not vanish at $0$,  it might happen that $n\geq 2$ so that the solution is vanishing at $0$, still being non-trivial (for instance, $P$ as in \eqref{eq:P} corresponds to the case $n=4$, so that $\mu_4^{\frac{1}{2}}(0)=20$). The rest of the paper is devoted to exhibit examples of such phenomenon.

\section{Non-uniqueness}\label{sec:main}

The aim of this section is to prove our main results \Cref{thm:main_1} and \Cref{thm:main_2}. Before going on, we need to recall a crucial result, which is peculiar of the fractional Laplacian (we refer to \cite{GSU} for the proof).
\begin{theorem}[\cite{GSU}, Theorem 1.2]\label{thm:UCP}
	Let $\Omega\sub\R^N$ be open and non-empty and let    $u\in H^s(\mathbb R^N)$   be such that $u\equiv (-\Delta)^s u\equiv 0$ in $\Omega$. Then $u\equiv 0$ in $\R^N$.
\end{theorem}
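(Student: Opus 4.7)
The plan is to reduce this nonlocal statement to a \emph{local} unique continuation problem via the Caffarelli--Silvestre extension, exactly in the spirit of the argument used in the proof of \Cref{lemma:UC}. Given $u\in H^s(\R^N)$, I would first build the $s$-harmonic extension $U(x,t):=(P_t\ast u)(x)$, which belongs to $H^1_{\loc}(\overline{\R^{N+1}_+};t^{1-2s})$ and weakly satisfies
\begin{equation*}
	\begin{bvp}
		-\dive(t^{1-2s}\nabla U) &= 0, &&\text{in }\R^{N+1}_+, \\
		-\lim_{t\to 0^+} t^{1-2s}\partial_t U &= \kappa_s(-\Delta)^s u, &&\text{on }\R^N,
	\end{bvp}
\end{equation*}
together with $U(\cdot,0)=u$. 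The two hypotheses $u\equiv 0$ and $(-\Delta)^s u\equiv 0$ in $\Omega$ say precisely that both the Dirichlet trace and the weighted conormal derivative of $U$ vanish on $\Omega$, which is the configuration that allows extending $U$ across $\{t=0\}$ without losing regularity.

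Next, I would perform the zero extension across $\Omega\times\{0\}$: set
\begin{equation*}
	\tilde{U}(x,t):=\begin{cases} U(x,t), & t\geq 0, \\ 0, & t<0, \end{cases}
\end{equation*}
in a cylindrical neighborhood $\Omega\times(-\delta,\delta)$. Using the vanishing of the Dirichlet trace to secure $\tilde{U}\in H^1_{\loc}(\Omega\times\R;|t|^{1-2s})$, and the vanishing of the weighted conormal derivative to match the transmission condition when integrating against compactly supported test functions, one verifies that
\begin{equation*}
	-\dive(|t|^{1-2s}\nabla \tilde{U})=0\quad\text{in }\Omega\times(-\delta,\delta),
\end{equation*}
in the weak sense. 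This is essentially the same gluing argument carried out in the proof of \Cref{lemma:UC}.

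The main obstacle is then the unique continuation step for the degenerate elliptic operator $\dive(|t|^{1-2s}\nabla\cdot)$ with $A_2$-Muckenhoupt weight $|t|^{1-2s}$. Since $\tilde{U}$ vanishes on the open set $\Omega\times(-\delta,0)$, I would invoke the unique continuation property recalled in \cite{TZ} to conclude that $\tilde{U}\equiv 0$ throughout a neighborhood of $\Omega\times\{0\}$; in particular $U\equiv 0$ in $\Omega\times(0,\delta)$. Connectedness of $\R^{N+1}_+$ and the same weighted-harmonicity then propagate this vanishing to all of $\R^{N+1}_+$, whence $u=U(\cdot,0)\equiv 0$ on $\R^N$. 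The delicate point, which is the heart of the matter in \cite{GSU}, is precisely the validity of strong/weak unique continuation for the weighted operator; my plan essentially outsources it to a Carleman-estimate-based result in the degenerate elliptic literature.
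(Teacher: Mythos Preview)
The paper does not give its own proof of this statement: it merely quotes the result from \cite{GSU} and explicitly refers the reader there for the proof. So there is no in-paper argument to compare against.

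That said, your outlined argument is correct and is essentially the standard route (and, indeed, the one underlying \cite{GSU}): pass to the Caffarelli--Silvestre extension, use the simultaneous vanishing of the Dirichlet trace and the weighted conormal derivative on $\Omega$ to glue $U$ with the zero function across $\Omega\times\{0\}$, obtain a weak solution of $\dive(|t|^{1-2s}\nabla\tilde U)=0$ in $\Omega\times(-\delta,\delta)$, and then invoke unique continuation for $A_2$-weighted divergence-form operators. This exactly mirrors the mechanism the paper itself uses in the proof of \Cref{lemma:UC}. The only point worth making explicit in your write-up is the propagation step in the open upper half-space: once $U\equiv 0$ on $\Omega\times(0,\delta)$, the equation $\dive(t^{1-2s}\nabla U)=0$ is locally uniformly elliptic with smooth coefficients away from $\{t=0\}$, so classical (even real-analytic) unique continuation and the connectedness of $\R^{N+1}_+$ yield $U\equiv 0$ everywhere, hence $u=U(\cdot,0)\equiv 0$.
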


We are now ready to prove our first main result.
\begin{proof}[Proof of \Cref{thm:main_1}]
	For the sake of simplicity, in the following we simply write $\gamma$ in place of $\gamma_\lambda$,   where $\gamma_\lambda=\gamma_1^s(\lambda)$ is given by \eqref{eq:gamma_n_s}, see also Lemma \ref{lemma:Psi_gamma}.   Let $2<p<2^*_s$ be such that
	\begin{equation*}
		\frac{2^*_s}{p-2}\geq q
	\end{equation*}
	 and let $u_1,u_2\in H^s_{0,\rad}(B_1)$ be two distinct nontrivial solutions of \eqref{nonlinearp}, whose existence is ensured by \Cref{multiplicity}. In view of \Cref{thm:FF} and \Cref{cor:nonlocal_radial} we have that $u_1,u_2\in C^{1,\alpha}_{\loc}(B_1\setminus\{0\})$ for some $\alpha\in(0,1)$  and there exists $\gamma_1,\gamma_2 \in [\gamma,+\infty)$ and $\kappa_1,\kappa_2\in\R\setminus\{0\}$ such that
	\begin{equation}\label{eq:main_1_1}
		\lim_{r\to 0}r^{-\gamma_i}u_i(r)=\kappa_i\quad\text{and}\quad \lim_{r\to 0}r^{-\gamma_i+1}u_i'(r)=\kappa_i\gamma_i\quad\text{for }i=1,2.
	\end{equation}
	Up to a change of sign, we can assume $\kappa_i>0$ and, without loss of generality, that $\kappa_1\geq \kappa_2$. From \eqref{eq:main_1_1} we deduce that, for any $i=1,2$, there exists $c_i\in\{0,\kappa_i\}$ and   $c_i'\in\{0,\kappa_i\gamma\}$   such that
	\begin{equation}\label{eq:main_1_2}
		\lim_{r\to 0}r^{-\gamma}u_i(r)=c_i\quad\text{and}\quad\lim_{r\to 0}r^{-\gamma+1}u_i'(r)=c_i'.
	\end{equation}
	In particular, $c_i=0$ if and only if $\gamma_i>\gamma$. 
	
		  Let us first show how to conclude the proof if $c_i=0$ for some $i\in\{1,2\}$, i.e., if $\gamma_i>\gamma$.
	We let in this case $\V_i:=|u_i|^{p-2}$ and we claim  that  $\V_i\in C^{1,\alpha}_{\loc}(B_{\bar r}\setminus\{0\})\cap L^q(B_{\bar r})$, up to choosing $\bar r\in (0,1]$ in such a way that $u_i$ is positive on $B_{\bar r}\setminus\{0\}$ and thus bounded away from $0$ on any compact subset of $B_{\bar r}\setminus\{0\}$ (the existence of such $\bar r$ is ensured by \eqref{eq:main_1_1}, since $\kappa_i>0$). The claim easily follows by definition of $\V_i$, Sobolev inequality and the fact that $u_i\in C^{1,\alpha}_{\loc}(B_1\setminus\{0\})$ and that the map $t\mapsto |t|^{p-2}$ is smooth outside $t=0$. We also prove that $\V_i$ satisfies the condition \eqref{eq:G}:  
	thanks to \eqref{eq:main_1_2} we derive that
	\begin{equation*}
		|\V_i(x)|+|\nabla \V_i(x)\cdot x|=|u_i(r)|^{p-2}+(p-2)|u_i(r)|^{p-3}|u_i'(r)|r\leq C r^{\gamma(p-2)},\quad\text{as }r\to 0,
	\end{equation*}
	for some $C>0$ independent of $x$, where $r=|x|$. In addition, since 
	\begin{equation}\label{eq:main_1_5}
		\gamma>-\frac{N-2s}{2}\quad\text{and}\quad 2<p<2^*_s
	\end{equation}
	we have that $\gamma(p-2)>-2s$, thus proving that $\V_i$ satisfies \eqref{eq:G}.
	  By taking into account \eqref{nonlinearp}, \eqref{eq:main_1_2} and the definition of $\V_i$, since we are assuming $c_i=0$, we see that $u_i$ satisfies \eqref{limitcondition} and
	\begin{equation*}
			(-\Delta)^su_i-\frac{\lambda}{|x|^{2s}} u_i =\V_i u_i \quad\text{in }B_{\bar r},
\end{equation*}
	thus the proof concludes by suitably scaling. Indeed,  by choosing  $u(x):=u_i(\bar r x)$, $\V(x):=\bar r^{2s}\V_i(\bar r x)$ and $R:=\bar r^{-1}$, we have that $u$ satisfies \eqref{eq:main} and \eqref{limitcondition} and that $\V$ belongs to $ C^{1,\alpha}_{\loc}(B_1\setminus\{0\}) \cap L^q(B_1)$, satisfies \eqref{eq:G} and $\V>0$ in $B_1\setminus\{0\}$. Notice that $u\in H^s(\mathbb R^N)$, $u\equiv0$ outside $B_R$ since $u_i\equiv0$ outside $B_1$. Moreover, if $\Omega$ is a  nonempty open set in $B_R$ then $u$ cannot vanish identically in $\Omega$: indeed, if $u\equiv 0$ in an open set $\Omega\sub B_R$, then $u_i\equiv 0$ in an open subset of $B_1$. At this point, since $u_i$ weakly solves \eqref{nonlinearp}, by \Cref{thm:UCP} we would conclude that $u_i\equiv 0$ in $\R^N$, and this is a contradiction.
	
	In order to proceed, we hereafter assume that $c_i=\kappa_i>0$ and that $\gamma_i=\gamma$ for both $i=1,2$. We now let
	\begin{equation*}
		u_{2,\delta}(x):=\delta\,u_2\left(\delta^{\frac{p-2}{2s}}\,x\right)\quad\text{where}\quad \delta:=\left(\frac{\kappa_1}{\kappa_2}\right)^{\left(1+\frac{\gamma(p-2)}{2s}\right)^{-1}},	
	\end{equation*}
	so that $\delta\geq 1$ and there holds
	\begin{equation}\label{eq:main_1_3}
		\lim_{r\to 0}r^{-\gamma}u_{2,\delta}(r)=\lim_{r\to 0}r^{-\gamma}u_1(r)=\kappa_1>0
	\end{equation}
	and
	\begin{equation}\label{eq:main_1_4}
		\lim_{r\to 0}r^{-\gamma+1}u_{2,\delta}'(r)=\lim_{r\to 0}r^{-\gamma+1}u_1'(r)=\kappa_1\gamma.
	\end{equation}
	Moreover, if we denote $R_{\delta}:=\delta^{-\frac{p-2}{2s}}\leq 1$, one can easily check that $u_{2,\delta}\in H^s_{0,\rad}(B_{R_{\delta}})$ weakly solves
	\begin{equation*}
		\begin{bvp}
			(-\Delta)^s u-\frac{\lambda}{|x|^{2s}}u&=|u|^{p-2}u, &&\text{in }B_{R_{\delta}}, \\
			u&=0, &&\text{in }\R^N\setminus B_{R_{\delta}}.
		\end{bvp}
	\end{equation*}
	At this point, we define $w:=u_1-u_{2,\delta}$,   so  that $w\in H^s_{0,\rad}(B_1)$.   First of all, we notice that, by construction
	\[
		\lim_{r\to 0}r^{-\gamma}w(r)=0.
	\]
	Moreover, if we let $g(t):=|t|^{p-2}t$ and
	\begin{equation}\label{eq:def_h}
		h(t_1,t_2):=\begin{cases}\displaystyle 
			\frac{g(t_1)-g(t_2)}{t_1-t_2}, &\text{for }t_1\neq t_2, \\
			g'(t_1), &\text{for }t_1=t_2,
		\end{cases}
	\end{equation}
	we have that $w$  weakly solves
	\begin{equation}\label{equazionew}
			(-\Delta )^s w-\frac{\lambda}{|x|^{2s}}w=\V_0 w\quad\text{in }B_{R_\delta}, 
	\end{equation}
	where $\V_0:=h(u_1,u_{2,\delta})$. We now claim that, for some $\rho\in(0,R_{\delta}]$, $\V_0\in  C^{1,\alpha}_{\loc}(B_\rho\setminus\{0\})\cap L^q(B_\rho)$, $\V>0$ in $B_\rho\setminus\{0\}$ and that $\V_0$ satisfies \eqref{eq:G}. More precisely, $\rho\in(0,R_\delta]$ is chosen in such a way that $u_1,u_{2,\delta}>0$ in $B_\rho\setminus\{0\}$ (the existence of such $\rho$ is ensured by \eqref{eq:main_1_3}). As a first step, we observe that, since $g\in C^\infty(\R\setminus\{0\})$, then $h\in C^\infty((0,+\infty)\times(0,+\infty))$ and, as a consequence, since $u_1,u_{2,\delta}\in C^{1,\alpha}_{\loc}(B_\rho\setminus\{0\})$ and $u_1,u_{2,\delta}>0$ in $B_\rho\setminus\{0\}$, then $\V_0\in C^{1,\alpha}_{\loc}(B_\rho\setminus\{0\})$ as well. Moreover, since $g$ is increasing, we have $\V_0>0$ in $B_\rho\setminus\{0\}$. We now prove that $\V_0\in L^q(B_\rho)$. In view of Lagrange theorem, for any $t_1,t_2>0$ there exists $t_{12}>0$ such that
	\begin{equation*}
		0<t_{12}\leq \max\{t_1,t_2\}\quad\text{and}\quad h(t_1,t_2)=g'(t_{12})=(p-1)t_{12}^{p-2},
	\end{equation*}
	which implies that
	\begin{equation*}
		|h(t_1,t_2)|\leq C(|t_1|^{p-2}+|t_2|^{p-2})\quad\text{for all }t_1,t_2>0,
	\end{equation*}
	for some $C>0$ independent of $t_1,t_2$. Therefore, since $u_i\in L^{2^*_s}(B_\rho)$, we have that $\V_0\in L^{2^*_s/(p-2)}(B_\rho)\sub L^q(B_\rho)$.	Let us now check the growth condition at the origin \eqref{eq:G}. By homogeneity, still denoting by $\V_0$ the radial profile of $\V_0$, we have that
	\begin{equation*}
		\V_0(r)=r^{\gamma(p-2)}h(r^{-\gamma}u_1(r),r^{-\gamma}u_{2,\delta}(r))
	\end{equation*}
	and using Euler's theorem for positive homogeneous functions 
	\begin{equation*}
		\V_0'(r)=r^{\gamma(p-2)-1}\nabla h(r^{-\gamma}u_1(r),r^{-\gamma}u_{2,\delta}(r))\cdot(r^{-\gamma+1}u_1'(r),r^{-\gamma+1}u_{2,\delta}'(r)),
	\end{equation*}
	where $r=|x|\in(0,\rho)$. Now, thanks to the regularity of $h$ and in view of \eqref{eq:main_1_3} and \eqref{eq:main_1_4}, we have that
	\begin{equation*}
		\lim_{r\to 0}r^{-\gamma(p-2)}\V_0(r)=h(\kappa_1,\kappa_1)=(p-1)\kappa_1^{p-2}
	\end{equation*}
	and that
	\begin{equation*}
		\lim_{r\to 0}r^{-\gamma(p-2)+1}\V_0'(r)=\nabla h(k_1,k_1)\cdot(\kappa_1\gamma,\kappa_1\gamma)=(p-1)(p-2)\kappa_1^{p-2}\gamma.
	\end{equation*}
	Moreover, in view of \eqref{eq:main_1_5} we deduce that $\gamma(p-2)>-2s$, which completes the proof of the claim. 
	  We conclude by crucially exploiting Theorem \ref{thm:UCP}, which allows to prove  that $w$ is nontrivial.  
	In particular, we claim that for all open, nonempty $\Omega\sub B_1$, we have $w\not\equiv 0$ in $\Omega$. Indeed,   suppose by contradiction that $w\equiv 0$ in some nonempty open set $\Omega\sub B_1$. If $\delta<1$ and  $\Omega \cap  (B_1\setminus \overline{B_{R_\delta}})\neq \emptyset$, then $u_1\equiv w\equiv 0$ in the open set $\Omega\cap (B_1\setminus \overline{B_{R_\delta}})$ and, in view of the equation satisfied by $u_1$ in $B_1$ and of \Cref{thm:UCP}, this would imply that $u_1\equiv 0$ in $\R^N$, which is a contradiction being $u_1$ nontrivial. On the other hand, if $\delta<1$ and $\Omega\sub \overline{B_{R_\delta}}$, thanks to   \eqref{equazionew}  and \Cref{thm:UCP}, we would obtain that $w\equiv 0$ in $\R^N$, hence $u_1\equiv 0$ in $B_1\setminus \overline{B_{R_\delta}}$, thus reaching a contradiction as in the previous case. Else if $\delta=1$, i.e., $\kappa_1=\kappa_2$, then \eqref{equazionew} and Theorem \ref{thm:UCP} imply $u_1\equiv u_2$ in $\R^N$, thus contradicting the fact that $u_1,u_2$ are distinct $H^s_{0,\rad}(B_1)$ solutions to \eqref{nonlinearp}. The claim is proven.  
	 Finally, in order to recover a solution on the ball $B_1$, we just scale and define 	 
	\begin{equation*}
		u(x):=w(\rho x),\quad\V(x):=\rho^{2s}\V_0(\rho x),\quad R:=\rho^{-1}.
	\end{equation*}
	In this way, since $\V_0\in C^{1,\alpha}_{\loc}(B_\rho\setminus\{0\})\cap L^q(B_\rho)$, $\V_0>0$ in $B_\rho\setminus\{0\}$ and since $\V_0$ satisfies \eqref{eq:G}, we get   $\V\in C^{1,\alpha}_{\loc}(B_1\setminus\{0\})\cap L^q(B_1)$, $\V>0$ in $B_1\setminus\{0\}$,  $\V$ satisfies \eqref{eq:G} as well, and $u$ satisfies \eqref{limitcondition} and \eqref{eq:main}. Moreover, $u\equiv 0$ outside $B_R$ since $w\equiv 0$ outside $B_1$,  and having shown that $w$ cannot vanish identically on a nonempty open subset of $B_1$, we obtain that $u$ cannot vanish identically on a nonempty open subset of $B_R$.
   
\end{proof}

We now prove our second main result.
\begin{proof}[Proof of \Cref{thm:main_2}]
	This proof closely follows the one of \Cref{thm:main_1}, hence we only sketch it.   In the case $\lambda=0$, we have $\gamma_1^s(0)=0$, where $\gamma_1^s(0)$ is given by \eqref{eq:gamma_n_s}, see also Lemma \ref{lemma:Psi_gamma}.  Let $0<\alpha<\tfrac{4s}{N-2s}$.   If $N\geq 6s$, let $p\in(\alpha+2,2^*_s)$ while, if $N<6s$, let $p\in(3,2^*_s)$. 
	As in the proof of \Cref{thm:main_1}, let $u_1,u_2\in H^s_{0,\rad}(B_1)$ be two distinct, nontrivial solutions of \eqref{nonlinearp}, whose existence is ensured by \Cref{multiplicity}. In view of \Cref{thm:FF}  and \Cref{cor:nonlocal_radial} we have that, for $i=1,2$, $u_i\in C^1(B_1)$ and    there exist $\gamma_1,\gamma_2\in[0,+\infty)$ such that \eqref{eq:main_1_1}   holds for some $\kappa_i\neq 0$ (it is not restrictive to assume $\kappa_i>0$),  so that there exist $c_i\in\{0,\kappa_i\}$ such that $u_i(0)=c_i$, with $c_i=0$ if and only if $\gamma_i>0$. Radiality implies $u'_i(0)=0$. 
	
	Let us consider the case in which  $u_i(0)=0$ (for either $i=1$ or $i=2$).
	By \eqref{eq:main_1_1} we can find $\bar r\in(0,1]$ such that $u_i>0$ in $B_{\bar r}\setminus\{0\}$.  
	Letting $\V_i:=|u_i|^{p-2}$, one can easily check that  $\V_i\in C^1(B_{\bar r}\setminus\{0\})$, $\V_i>0$ in $B_{\bar{r}}\setminus\{0\}$, that $\V_i\in C^{0,\alpha}_{\loc}(B_1)$ if $N\geq 6s$  (since $p-2>\alpha$), that $\V_i\in C^1(B_1)$ if $N<6s$ and that $\V_i$ satisfies \eqref{eq:G}.  One concludes   by reasoning as in the proof of Theorem \ref{thm:main_1},  choosing $u(x):=u_i(\bar r x)$, $\V(x)=\bar r^{2s}\V_i(\bar r x)$ and $R:=\bar r^{-1}$. In this way, one has indeed $\V \in C^{1}(B_1\setminus\{0\})$, $\V>0$ in $B_1\setminus\{0\}$, $\V\in C^{0,\alpha}_{\loc}(B_1)$ if $N\ge 6s$, $\V\in C^1(B_1)$ if $N<6s$, $u(0)=0$, $(-\Delta)^s u=\V u$ in $B_1$,
	$u\equiv 0$ outside $B_{R}$ and $u\not\equiv0$ on any open set contained in $B_{R}$ . 
	
	Let us  consider the case $u_i(0)=\kappa_i\neq 0$ (it is not restrictive to assume $\kappa_1\geq \kappa_2>0$). Let
	\[
		u_{2,\delta}(x):=\delta \, u_2(\delta^{\frac{p-2}{2s}}\,x)\quad\text{with}\quad \delta:=\frac{\kappa_1}{\kappa_2}\ge 1,
	\]
	  so that \eqref{eq:main_1_3} holds with $\gamma=0$,  
	and finally let $w:=u_1-u_{2,\delta}$. Moreover, let $R_\delta:=\delta^{-\frac{p-2}{2s}}\leq 1$ and
	$
		\V_0:=h(u_1,u_{2,\delta}),
	$
	where $h$ is defined in \eqref{eq:def_h}. Since $h\in C^{0,p-2}_{\loc}(\R^2)$ if $p\leq 3$ (corresponding to the case $N\geq 6s$) and $h\in C^1(\R^2)$ if $p>3$ (corresponding to the case $N<6s$),   and since $u_1,u_{2,\delta}\in C^1(B_{R_{\delta}})$, we notice that $\V_0\in C^{0,\alpha}_{\loc}(B_{R_{\delta}})$  if $N\geq 6s$ and $\V_0\in C^1(B_{R_{\delta}})$   if $N<6s$.
 	We also have $\V_0\in C^1(B_\rho)$ for some suitable $\rho\in (0,R_\delta]$, even if $N\ge 6s$, since a small enough $\rho$ can be chosen such that $u_1,u_{2,\delta}>0$ in $B_{\rho}$, in view of \eqref{eq:main_1_3}.  In addition, by monotonicity of $g$ we get that $\V_0>0$ in $B_\rho\setminus\{0\}$.
	 Moreover, by following the very same argument as in the proof of \Cref{thm:main_1}, one can check that $\V_0$ satisfies the condition \eqref{eq:G}. By construction, we have that $w\in H^s_{0,\rad}(B_1)$ satisfies
	\begin{equation*}
		\begin{bvp}
			(-\Delta)^s w&=\V_0 w &&\text{in }B_{R_\delta}, \\
			w&=0 &&\text{in }\R^N\setminus B_1, \\
			w(0)&=0.
		\end{bvp}
	\end{equation*}
	Again with the very same argument as in the proof of \Cref{thm:main_1}, it is easy to see that for any open, non-empty $\Omega\sub B_1$, $w\not \equiv 0$ in $\Omega$. Finally,   by suitably scaling, i.e., by letting $u(x):=w(\rho x)$, $\V(x):=\rho
	^{2s}\V_0(\delta x)$, $R:=\rho^{-1}$,   we can conclude the proof.
\end{proof}

\textbf{Acknowledgments.}
The authors warmly thank Lorenzo Brasco and Veronica Felli for useful discussions and suggestions.
E.M. is supported by the MIUR- PRIN project 202244A7YL.
R.O. is supported by the European Research Council (ERC), through the European Union’s
Horizon 2020 project ERC VAREG - Variational approach to the regularity of the free boundaries (grant agreement No. 853404) and by the 2024 INdAM-GNAMPA project no. \texttt{CUP\_E53C23001670001}. R.O. also acknowledges the MIUR Excellence Department Project awarded to the Department of Mathematics, University of Pisa, CUP I57G22000700001. B.V. is supported by  the MIUR-PRIN project 2022SLTHCE. This manuscript reflects only the authors' views and opinions and the Ministry cannot be considered responsible for them. The authors are members of the GNAMPA group of the Istituto Nazionale di Alta Matematica.\\

\textbf{Data availability.} Data sharing not applicable to this article as no datasets were generated or analyzed during
the current study.\\\\

\noindent{\large{\textbf{Declarations}}}\\

\textbf{Conflict of interest.} On behalf of all authors, the corresponding author states that there is no Conflict of interest

\bibliographystyle{aomalpha}

\bibliography{biblio_mov}

\end{document}